\newtheoremstyle{customtheorem}% name of the style to be used
  {1pt}% space above
  {1pt}% space below
  {}% font to use in the body of the theorem (empty value is the same as \itshape)
  {10pt}% indentation (empty value is the same as 0pt)
  {\itshape}% font to use in the theorem header
  {:}% punctuation between theorem header and body
  {.5em}% space after theorem header
  {\thmname{#1} \thmnumber{#2} \thmnote{(#3)}} 
\theoremstyle{customtheorem}
\newtheorem{theorem}{Theorem}
\newtheoremstyle{customlemma}% name of the style to be used
  {1pt}% space above
  {1pt}% space below
  {}% font to use in the body of the theorem (empty value is the same as \itshape)
  {10pt}% indentation (empty value is the same as 0pt)
  {\itshape}% font to use in the theorem header
  {:}% punctuation between theorem header and body
  {.5em}% space after theorem header
  {\thmname{#1} \thmnumber{#2} \thmnote{(#3)}} 
\theoremstyle{customlemma}
\newtheorem{lemma}{Lemma}
\newtheoremstyle{customdefinition}% name of the style to be used
  {1pt}% space above
  {1pt}% space below
  {}% font to use in the body of the theorem (empty value is the same as \itshape)
  {10pt}% indentation (empty value is the same as 0pt)
  {\itshape}% font to use in the theorem header
  {:}% punctuation between theorem header and body
  {.5em}% space after theorem header
  {\thmname{#1} \thmnumber{#2} \thmnote{(#3)}} 
\theoremstyle{customdefinition}
\newtheorem{definition}{Definition}
\newtheoremstyle{customproposition}% name of the style to be used
  {1pt}% space above
  {1pt}% space below
  {}% font to use in the body of the theorem (empty value is the same as \itshape)
  {10pt}% indentation (empty value is the same as 0pt)
  {\itshape}% font to use in the theorem header
  {:}% punctuation between theorem header and body
  {.5em}% space after theorem header
  {\thmname{#1} \thmnumber{#2} \thmnote{#3}}
  \theoremstyle{customproposition}
\newtheorem{proposition}{Proposition}
\newtheoremstyle{customremark}% name of the style to be used
  {1pt}% space above
  {1pt}% space below
  {}% font to use in the body of the theorem (empty value is the same as \itshape)
  {10pt}% indentation (empty value is the same as 0pt)
  {\itshape}% font to use in the theorem header
  {:}% punctuation between theorem header and body
  {.5em}% space after theorem header
  {\thmname{#1} \thmnumber{#2} \thmnote{(#3)}}
\theoremstyle{customremark}
\renewenvironment{proof}[1][\proofname]{\par
  \pushQED{\qed}%
  \normalfont \topsep=2pt \partopsep=2pt % Adjust the spacing as needed
  \trivlist
  \item[\hspace{15pt}\itshape #1\@addpunct{:}]\ignorespaces
}{%
  \popQED\endtrivlist\@endpefalse
}
\begin{document}

\title{Proximal Dogleg Opportunistic Majorization for Nonconvex and Nonsmooth Optimization}

\author{Yiming~Zhou and Wei~Dai\thanks{The authors are with the Department of Electrical and Electronic Engineering, Imperial College London, SW7 2AZ London, U.K. (yiming.zhou18@imperial.ac.uk; wei.dai1@imperial.ac.uk)}}
\markboth{IEEE }%
{Shell \MakeLowercase{\textit{et al.}}: Bare Demo of IEEEtran.cls for IEEE Journals}

\maketitle
\begin{abstract}

We consider minimizing a function consisting of a quadratic term and a proximable term which is possibly nonconvex and nonsmooth. This problem is also known as scaled proximal operator. Despite its simple form, existing methods suffer from slow convergence or high implementation complexity or both. To overcome these limitations, we develop a fast and user-friendly second-order proximal algorithm. Key innovation involves building and solving a series of opportunistically majorized problems along a hybrid Newton direction. The approach directly uses the precise Hessian of the quadratic term, and calculates the inverse only once, eliminating the iterative numerical approximation of the Hessian, a common practice in quasi-Newton methods. The algorithm's convergence to a critical point is established, and local convergence rate is derived based on the Kurdyka-Łojasiewicz property of the objective function. Numerical comparisons are conducted on well-known optimization problems. The results demonstrate that the proposed algorithm not only achieves a faster convergence but also tends to converge to a better local optimum compare to benchmark algorithms.

% We consider the minimization of an objective function that comprises a quadratic term and a proximable term, which can be nonconvex and nonsmooth. Such an optimization arises in many signal processing and machine learning tasks. Despite its simple form, existing methods in the literature suffer from slow convergence rate, or high implementation complexity, or both. To address these limitations, we develop a fast and user-friendly proximal algorithm which leverages the exact Hessian of the quadratic term. One key design element is to adapt the dogleg search, originally used for trust region characterisation, to identify a descent direction and a step-size for majorize minimization, hence termed Proximal Dogleg Opporunistic Majorization (PDOM). We prove that PDOM converges to a critical point, and that it converges Q-super-linearly if the objective function has the Łojasiewicz exponent smaller than $1/2$. Numerical performance comparison between PDOM and benchmark algorithms has shown that PDOM not only converges fast but also typically achieves a better local optimum. 

\end{abstract}

\begin{IEEEkeywords}
Majorization-minimization, nonconvex and nonsmooth optimization, proximal Newton-like method.
\end{IEEEkeywords}

\IEEEpeerreviewmaketitle

\section{Introduction}
\IEEEPARstart{M}{any} signal processing and machine learning problems can be formulated as the composition of a quadratic (smooth) term and a regularization term which can be nonsmooth and nonconvex. That is
\begin{equation}
\label{eq:composite-problem}
    \min_{\bm{x} \in \mathbb{R}^{n}} f(\bm{x}) := \underbrace{\frac{1}{2} \bm{x}^{T} \bm{Q} \bm{x}+\bm{b}^{T} \bm{x} }_{q(\bm{x})} + h(\bm{x}),
\end{equation}
where $\bm{x}$ is the decision variable, and $\bm{Q} \succ 0$
\footnote{When $\bm{Q}$ is rank deficient, an $\epsilon \bm{I}$ can be added into $\bm{Q}$ where $\epsilon > 0$ is small.} 
and $\bm{b}$ are given parameters. 
We assume that  $h$ is proximable, i.e., its proximal operator \cite{parikh2014proximal}
\begin{equation}
\operatorname{prox}_{\tau h}(\bm{y}):=\arg \min_{\bm{x} \in \mathbb{R}^n} \left\{h(\bm{x})+\frac{1}{2 \tau}\|\bm{x}-\bm{y}\|^2\right\}
\label{eq:prox-operator}
\end{equation}
with $\tau > 0$ is easy to compute. 
We further assume that the overall objective function $f$ is lower bounded. 

This formulation \eqref{eq:composite-problem} finds many applications in signal processing and machine learning tasks.  A well-known example is compressed sensing (CS) \cite{donoho2006compressed}, which has been applied to medical imaging \cite{trzasko2008highly}, vibration monitoring \cite{wang2018nonconvex}, sparse robust signal recovery \cite{suzuki2023sparse}, etc. CS recovery can be formulated in \eqref{eq:composite-problem} where the quadratic term $q(\bm{x})$ enforces data fidelity and the regularization term $h(\bm{x})$ promotes sparsity. Commonly used regularizers include $\ell_1$ norm \cite{candes2008introduction,chen2001atomic,davenport2012introduction}, $\ell_0$ pseudo-norm \cite{blumensath2009iterative}, minimax concave penalty \cite{zhang2010nearly}, and capped-$\ell_1$ penalty \cite{zhang2010analysis}. 
Other examples include robust low-rank matrix completion \cite{chen2015fast,guo2018low,huang2021robust}, robust principle component analysis (RPCA)\cite{candes2011robust,chandrasekaran2011rank,ma2018efficient} and robust recovery of subspace structures \cite{liu2012robust} for machine learning. The regularization term is designed to explore the underlying low-rank structure of the solution, e.g., nuclear norm or the indicator function of the matrix rank. See Section \ref{sec:simulations} for examples tested in this paper. 

\subsection{Brief Discussions of the Literature}

A natural choice for solving \eqref{eq:composite-problem} is the proximal gradient (PG) method, also known as the forward-backward splitting (FBS) \cite{combettes2011proximal,parikh2014proximal,antonello2018proximal}.  The PG generalizes classical gradient descent from the smooth case to the nonsmooth case by introducing proximal operators. Its convergence to a critical point has been intensively studied in the literature --- for convex $f$ \cite{sahu2021convergence} and for nonconvex $f$ \cite{attouch2009convergence,attouch2013convergence,jia2023convergence} --- based on the Kurdyka-Łojasiewicz (KL) property. The established results indicate that the convergence rate is not superior to a sublinear rate of order $O(1 / k)$, where $k$ is the iteration count.

Nesterov extrapolation \cite{nesterov1983method} is an effective acceleration scheme to improve the convergence rate. %, applied both to convex and nonconvex functions. 
With convex $f$, the convergence rate is accelerated to $O\left(1 / k^2\right)$ \cite{beck2009fast}. For nonconvex $f$, the accelerated PG (APG) algorithm \cite{li2015accelerated} chooses between the standard proximal gradient step and the accelerated step in each iteration, and accepts the one leading to lower value in the objective function. However, linear or sublinear convergence rate of APG is proved only under certain conditions \cite{li2015accelerated,gu2018inexact,attouch2009convergence,attouch2013convergence,jia2023convergence}.

Recent research has focused on Newton-type algorithms. The proximal Newton method \cite{lee2014proximal} considers an objective function comprising a proximable term and a second-order differentiable term. This iterative algorithm constructs a scaled proximal operator \eqref{eq:composite-problem} in each iteration, derived from the Hessian of the differentiable term, and then solves it. Assuming both strong convexity of the objective function and efficient solvability of the scaled proximal operator \eqref{eq:composite-problem} in each iteration, the method achieves a superlinear asymptotic convergence rate. 
Subsequent work in \cite{adler2020new} reduces computational efforts by adopting Shamanskii’s philosophy \cite{Shamanskii1967AMO}, updating the Hessian once in every $n$ iterations, where the value of $n$ influences the practical convergence behavior of the algorithm. However, solving the scaled proximal operator \eqref{eq:composite-problem} poses a computational challenge and devising an efficient solver remains an open problem in current research. Furthermore, the observed and theoretically proven fast convergence in above works is limited to convex cases, with no guarantee for nonconvex problems.

The quasi-Newton approach in \cite{patrinos2013proximal,stella2017forward,stella2017simple,themelis2018forward,de2022proximal} avoids direct manipulation of the scaled proximal operator. It introduces a forward-backward envelope (FBE) of the objective function, ensuring equivalence in minimization results with that of the original function. During iterations, the envelope's gradient is computed, a Hessian approximation for the overall envelope is obtained using quasi-Newton mechanisms like BFGS and L-BFGS \cite{liu1989limited}, and a line search is performed along the obtained quasi-Newton direction. However, the fast convergence requires the isolatedness of the limit point (the strong local convexity).\footnote{
Although not explicitly stated in \cite[Theorem 2.6]{stella2017forward}, the gradient and the Hessian of the envelope are not well defined at the points where the solution of the proximal operator is not unique. 
} Additionally, implementing quasi-Newton approaches for large-scale problems is not always convenient.

\subsection{Contributions}
%\textcolor{red}{We need some explicit statements to differentiate our work from Newton and envelope methods after we tidy up the technical parts.}

In this paper, we develop a second-order proximal algorithm, named the Proximal Dogleg Opportunistic Majorization (PDOM) algorithm, for the nonconvex and nonsmooth problem \eqref{eq:composite-problem}. Our contributions are summarized as follows.
\begin{enumerate}

    \item The central concept PDOM is majorization-minimization (MM) wherein the surrogate function is crafted along a dogleg path which combines both gradient and Newton directions. The gradient direction guarantees the correctness of the critical point of the algorithm; and the Newton direction allows fast convergence.

    \item The computational cost of PDOM (per iteration) is lower compared to FBE-based quasi-Newton methods. Like all Newton-type algorithms, a backtracking is necessary for PDOM. However, unlike FBE-based methods where the Hessian approximation and Newton direction are updated at each iteration, PDOM calculates both the Hessian inverse and the Newton direction only once during algorithm initialization, thereby reducing computational cost.

    \item We prove that PDOM converges to a critical point. Moreover, local convergence rates of PDOM are derived for all three different regimes of the Łojasiewicz exponent, assuming the Kurdyka-Łojasiewicz property.

    % \textcolor{red}{PDOM has been proved to converge to a critical point. Moreover, local convergence rates of PDOM have been derived for all three different regimes of the Łojasiewicz exponent, assuming the Kurdyka-Łojasiewicz property. }
    
    \item Numerical evaluations have been conducted on well-known nonconvex problems. Specifically, the evaluations have shown the rapid convergence of PDOM and empirically revealed that PDOM finds better local minimizers compared to benchmark algorithms.

    % \textcolor{red}{Numerical evaluations have been conducted on well-known nonconvex problems. Specifically, the evaluations have shown the rapid convergence of PDOM and empirically revealed that PDOM finds better local minimizers compared to benchmark algorithms.}
\end{enumerate}

\subsection{Notations}
Throughout this paper, we use $\mathbb{R}^{n}$ to denote the $n$-dimensional Euclidean space. The symbols $\langle\cdot, \cdot\rangle$ and $\|\cdot\|$ denote the standard inner product and norm in the space $\mathbb{R}^n$. For any $\bm{x} \in \mathbb{R}^{n}$, the $\ell_2$ norm, the $\ell_1$ norm, and the $\ell_0$ norm pseudo-norm are defined by $\|\bm{x}\|_2:=\sqrt{\bm{x}^{T} \bm{x}}$, $\|\bm{x}\|_1:=\sum_{i=1}^n\left|x_i\right|$, and $\|\bm{x}\|_0:=\left|\operatorname{supp}(\bm{x})\right|$ where $\operatorname{supp}(\cdot)$ counts the number of nonzero elements in $\bm{x}$. Given a positive semidefinite matrix $\bm{Q} \in \mathbb{R}^{n \times n}$, the scaled norm of $\bm{x}$ is defined as $\|\bm{x}\|_{\bm{Q}}:=\sqrt{\bm{x}^{\mathrm{T}} \bm{Q} \bm{x}}$. Given a closed set $\Omega \subseteq \mathbb{R}^n$, $\operatorname{dist}\left(\bm{x},\Omega\right) :=\inf\left\{\|\bm{y}-\bm{x}\|_2: \bm{y} \in \Omega\right\}$ calculates the distance between $\bm{x}$ and $\Omega$.

% Finally, for a nonempty closed set $D \subseteq \mathbb{R}^n$, the indicator function $\delta_D$ equals zero in $D$ and is infinity otherwise.

\section{PRELIMINARIES}

\begin{definition}
A function $f: \mathbb{R}^n \rightarrow(-\infty,+\infty]$ is said to be proper if  $\operatorname{dom} f \neq \emptyset$, where $\operatorname{dom} f=\{\bm{x} \in \mathbb{R}^n: f(\bm{x})<+\infty\}$, and lower semicontinuous at point $\bm{x}_{0}$ if
\begin{equation}
\label{eq:lsc-definition}
\liminf _{\bm{x} \rightarrow \bm{x}_0} f(\bm{x}) \geq f\left(\bm{x}_0\right).
\end{equation}
\end{definition}

\begin{definition}
    A function $f: \mathbb{R}^n \rightarrow \mathbb{R}$ is said to has a Lipschitz gradient if for all $\bm{x},\bm{y} \in \operatorname{dom}f$ it holds that
    \begin{equation}
    \|\nabla f(\bm{x})-\nabla f(\bm{y})\| \leq L\|\bm{x}-\bm{y}\|.
    \end{equation}
    The Lipschitz constant of the gradient, denoted as $L_{f}$, is defined as the smallest value that satisfies this inequality. 
\end{definition}
The value of $L_{f}$ for a twice differentiable function $f: \mathbb{R}^n \rightarrow \mathbb{R}$ with a positive
    semi-definite Hessian matrix $\bm{Q} \in \mathbb{R}^{n \times n}$ is the largest eigenvalue of $\bm{Q}$ denoted $\lambda_{\max}(\bm{Q})$.

\begin{definition}[Subdifferential \cite{rockafellar2009variational}] Let $h: \mathbb{R}^n \rightarrow \mathbb{R} \cup\{+\infty\}$ be a proper and lower semicontinuous function. For a given $\bm{x} \in$ dom $h$, the Frechet subdifferential of $h$ at $\bm{x}$, written as $\hat{\partial} h(\bm{x})$, is the set of all vectors $\bm{v} \in \mathbb{R}^n$ which satisfy
$$
\liminf _{\bm{y} \neq \bm{x}, \bm{y} \rightarrow \bm{x}} \frac{h(\bm{y})-h(\bm{x})-\langle\bm{v}, \bm{y}-\bm{x}\rangle}{\|\bm{y}-\bm{x}\|} \geq 0
$$
The subdifferential (which is also called the limiting subdifferential) of $h$ at $\bm{x}\in \operatorname{dom}h$, written as $\partial h(\bm{x})$, is defined by
    \begin{align}
    \partial h(\bm{x}):=\{\bm{v} \in \mathbb{R}^n : & \exists \bm{x}^k \rightarrow  \bm{x}, h\left(\bm{x}^k\right) \rightarrow h(\bm{x}),
    \nonumber
    \\
    &
    \bm{v}^k \in \hat{\partial} h\left(\bm{x}^k\right) \rightarrow \bm{v}, k \rightarrow \infty\}.
    \label{eq:sub-definition}
    \end{align}
    A point $\bm{x}^{\star} \in \operatorname{dom} \partial h$ is called a critical point of $h$ if $\bm{0} \in \partial h(\bm{x}^{\star})$, in which we define $\operatorname{dom} \partial h:=\{\bm{x} \in \mathbb{R}^n: \partial h(\bm{x}) \neq \emptyset\}$.
\end{definition}

\begin{definition}[Kurdyka-Łojasiewicz property \cite{attouch2010proximal}]
    A proper closed function $h: \mathbb{R}^n \rightarrow \mathbb{R} \cup\{+\infty\}$ is said to have the Kurdyka-Łojasiewicz (KL) property at $\hat{\bm{x}} \in \text{dom}\partial h$ if there exists $\eta \in (0,+\infty]$, a neighborhood $\mathcal{B}_\rho(\hat{\bm{x}}) \triangleq\{\bm{x}:\|\bm{x}-\hat{\bm{x}}\|<\rho\}$,  and a continuous desingularizing concave function $\psi:[0, \eta) \rightarrow[0,+\infty)$ with $\psi(0)=0$ such that,
    \begin{enumerate}[label=(\roman*)]
    \item $\psi$ is a continuously differentiable function with $\psi^{\prime}(x)>0$, $\forall x \in(0, \eta)$,
    \item for all $\bm{x} \in \mathcal{B}_\rho(\hat{\bm{x}}) \cap$$\{\bm{u} \in \mathbb{R}^n: h(\hat{\bm{x}})<h(\bm{x})<h(\hat{\bm{x}})+\eta\}$, it holds that
    \begin{equation}
    \label{eq:KL-inequality}
    \psi^{\prime}(h(\bm{x})-h(\hat{\bm{x}})) \operatorname{dist}(0, \partial h(\bm{x}))>1.
    \end{equation}
    \end{enumerate}
A proper closed function $h$ satisfying the KL property at all points in $\operatorname{dom} \partial h$ is called a KL function.
\end{definition}
\begin{definition}[Łojasiewicz exponent \cite{yu2022kurdyka}]
For a proper closed function $h$ satisfying the KL property at $\hat{\bm{x}} \in \operatorname{dom}\partial h$, if the desingularizing function $\psi$ can be chosen as $\psi(t) = \frac{C}{1-\theta} t^{1-\theta}$ for some $C > 0$ and $\theta \in [0,1)$, i.e., there exist $\rho > 0$ and $\eta \in (0,+\infty]$ so that
\begin{equation}
\label{eq:KL-inequality-1}
\operatorname{dist}(0, \partial h(\bm{x})) \geq C(h(\bm{x})-h(\hat{\bm{x}}))^{\theta},
\end{equation}
where $\bm{x} \in \mathcal{B}_\rho(\hat{\bm{x}})$ and $h(\hat{\bm{x}})<h(\bm{x})<h(\hat{\bm{x}})+\eta$, then we say that $h$ has the KL property at $\hat{\bm{x}}$ with an exponent of $\theta$. We say that $h$ is a KL function with an exponent of $\theta$ if $h$ has the same exponent $\theta$ at any $\hat{\bm{x}} \in \operatorname{dom} \partial h$.
\end{definition}

A wide range of functions encountered in optimization problems have the KL property. An example is that all proper closed semi-algebraic or  subanalytic functions are KL functions with the exponent $\theta \in [0,1)$ \cite{attouch2010proximal}. The value of Łojasiewicz exponent determines the local convergence rate of the PDOM. In Subsection \ref{subsec:rate-analysis} , we provide the exponent value of the problem under test. To the best of our knowledge, the Łojasiewicz exponent for the RPCA formulation \eqref{eq:RPCA} is first established in this literature. 

\subsection{PG from the Majorization-minimization Angle}
\label{subsec:PG-MM}
In this subsection, we analyze the PG as a majorization-minimization algorithm and point out limitations causing the slow convergence rate. The PG algorithm is a classical method to solve composite optimization problems \eqref{eq:composite-problem}. Each iteration of the PG can be viewed as a ''proximal line search'' conducted along the (negative) gradient direction with a positive step size $\tau$. At a given point $\bm{x}^k$, PG solves the surrogate function
\begin{align}
&\underbrace{q\left(\bm{x}^k\right)+\left\langle\nabla q\left(\bm{x}^k\right), \bm{x}-\bm{x}^k\right\rangle+\frac{1}{2 \tau}\left\|\bm{x}-\bm{x}^k\right\|^2}_{m_{g}(\bm{x}; \bm{x}^k)} + h(\bm{x})\nonumber \\
&= \frac{1}{2 \tau}\left\|\bm{x}-\left(\bm{x}^k-\tau \nabla q\left(\bm{x}^k\right)\right)\right\|^2+h(\bm{x})+c,\label{eq:pg-subproblem}    
\end{align}
where $c\in \mathbb{R}$ is a constant. The solution of \eqref{eq:pg-subproblem} is denoted as the (unscaled) proximal operator:
\begin{equation}
    \label{eq:standard-prox}
    \begin{aligned}
    \bm{x}^{k+1}  &=\operatorname{prox}_{\tau h}\left(\bm{x}^k-\tau \nabla q\left(\bm{x}^k\right)\right) \\
    & :=\arg \min _{\bm{x}} h(\bm{x})+\frac{1}{2 \tau}\left\|\bm{x}-\left(\bm{x}^k-\tau \nabla q\left(\bm{x}^k\right)\right)\right\|^2.
    \end{aligned}
\end{equation}
Under the assumption that $h(\cdot)$ is proximable, solving \eqref{eq:standard-prox} is computationally straightforward. 

The generated sequence leads to a non-increasing objective value, where the surrogate function $m_{g}(\bm{x}; \bm{x}^k)$ majorizes $q(\bm{x})$, i.e., $m_{g}(\bm{x}; \bm{x}^k) \geq q(\bm{x})$ for all $\bm{x} \in \operatorname{dom}f$ with $\tau < 1/L_{q}$. 
\begin{align}
    f(\bm{x}^{k+1}) &= q(\bm{x}^{k+1}) + h(\bm{x}^{k+1})\stackrel{(a)}{\leq} m_{g}(\bm{x}^{k+1})+h(\bm{x}^{k+1})\nonumber\\
    &\stackrel{(b)}{\leq} m_{g}(\bm{x}^{k})+h(\bm{x}^{k}) = f(\bm{x}^{k}),
    \label{eq:pg-monotone}
\end{align}
where $(a)$ is due to the majorization step, and $(b)$ is a consequence of the proximal operator. Provided that $f$ is lower bounded, the convergence of $f\left( \bm{x}^k \right)$ is guaranteed. 

However, the gradient direction may lead to a slow convergence, especially for solving nonconvex functions \cite{gu2018inexact,attouch2009convergence,attouch2013convergence,jia2023convergence}. Meanwhile, as observed in \cite[Chapter 9]{boyd2004convex}, when the Hessian of a function has a large condition number, typically on the order of 1000 or greater, the gradient-based method exhibits prohibitively slow convergence, rendering it impractical for real-world applications.

\section{The PDOM algorithm and convergence analysis}

In this section, we introduce the PDOM algorithm to solve the possible nonconvex and nonsmooth problem \eqref{eq:composite-problem}, addressing the limitations outlined in Subsection \ref{subsec:PG-MM}. We adopt the dogleg search, originally from the trust region method, in the PDOM algorithm by replacing the descent direction from the gradient to a hybrid direction. Specifically, the new direction is a convex combination of the gradient and Newton's directions of $q$. Different from the trust region method which minimizes the objective function along the hybrid direction within the trust region, PDOM solves a series of majorized problems. In particular, the surrogate is only require to majorize the objective function along the line connecting the current iterate and the path point which is a weaker condition compared to global upper bound.

\subsection{Hybrid direction and opportunistic majorization}
Given $\alpha \in (0,2]$, the dogleg path is denoted as \footnote{The path can be written more compactly as a single expression, but for the convenience of subsequent discussions, we use the form \eqref{eq:dogleg-standard}.}
\begin{align}
    \bm{p}(\alpha)
    & 
    := \begin{cases}
         \bm{p}_{\tau}
        & \alpha \in \left( 0,1 \right], \\
        \bm{p}_{\tau} 
        + (\alpha - 1) \left( \bm{p}_N - \bm{p}_{\tau} \right) 
        & \alpha \in \left( 1,2 \right],
    \end{cases}
    \label{eq:dogleg-standard}
\end{align}
where
\begin{align}
    \bm{g}
    := \nabla q(\bm{x}) = \bm{Q}\bm{x} + \bm{b},\quad
    \bm{p}_{\tau}
    := - \tau \bm{g},\quad
    \bm{p}_N 
    := - \bm{Q}^{-1} \bm{g},
    \nonumber
\end{align}
and $\tau$ is the fixed step size of the gradient direction within $(0,1/L_{q})$, and $\bm{p}_{N}$ denotes Newton point. The gradient direction is essential for ensuring convergence to a critical point, because at $\bm{x}^{\text{cri}}$, the first-order optimality condition of \eqref{eq:composite-problem} implies $\bm{0} \in \partial f(\bm{x}^{\text{cri}}) = \nabla q(\bm{x}^{\text{cri}}) + \partial h(\bm{x}^{\text{cri}})$, where the gradient direction is needed.

The path differs from the one in \cite[Chapter 4]{NoceWrig06}, with the scaling factor $\alpha$ excluded from the first segment. This difference arises because $\bm{p}_{\tau}$ consistently functions as the descent direction, and the trust-region radius constraint is not considered. Despite this distinction, our path remains a descent direction for the quadratic term.
\begin{lemma}
    \label{lem:zero-inner-product}
    The equality in the following equation,
    \begin{equation}
    \label{eq:q-p-inner-product}
    \langle \bm{p}(\alpha),\nabla q(\bm{p}(\alpha)) \rangle \le 0,
    \end{equation}
    is satisfied when $\tau = 1/\lambda_{\max}$, where $\lambda_{\max}$ denotes the largest eigenvalue of $\bm{Q}$. For any other $\tau \in (0,1/\lambda_{\max})$, the strict inequality holds.
\end{lemma}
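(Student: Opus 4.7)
My plan is to verify the inequality by direct algebraic computation, handling the two pieces of the dogleg path separately. Throughout I interpret $\nabla q(\bm{p}(\alpha))$ as the gradient evaluated at the candidate next iterate $\bm{x}+\bm{p}(\alpha)$, which by the quadratic form of $q$ equals $\bm{g}+\bm{Q}\bm{p}(\alpha)$. The two identities that drive the argument are $\bm{Q}\bm{p}_\tau=-\tau\bm{Q}\bm{g}$ and, crucially, $\bm{Q}\bm{p}_N=-\bm{g}$, which is just the Newton system for the quadratic $q$.

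For the gradient segment $\alpha\in(0,1]$, the vector $\bm{p}(\alpha)=-\tau\bm{g}$ is constant in $\alpha$, so a one-line substitution gives $\langle\bm{p}(\alpha),\bm{g}+\bm{Q}\bm{p}(\alpha)\rangle=\tau\bigl(\tau\,\bm{g}^{T}\bm{Q}\bm{g}-\|\bm{g}\|^2\bigr)$. The Rayleigh bound $\bm{g}^{T}\bm{Q}\bm{g}\le\lambda_{\max}\|\bm{g}\|^2$ then yields the claim, with equality at $\tau=1/\lambda_{\max}$ precisely when $\bm{g}$ is an eigenvector of $\bm{Q}$ associated with $\lambda_{\max}$ and strict inequality for any $\tau<1/\lambda_{\max}$ with $\bm{g}\neq\bm{0}$.

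For the transition segment $\alpha\in(1,2]$, I would set $\beta:=\alpha-1\in(0,1]$ so that $\bm{p}(\alpha)=(1-\beta)\bm{p}_\tau+\beta\bm{p}_N$. Substituting in $\bm{Q}\bm{p}_N=-\bm{g}$, the vector $\bm{g}+\bm{Q}\bm{p}(\alpha)$ collapses to $(1-\beta)(\bm{g}-\tau\bm{Q}\bm{g})$, so a global factor of $(1-\beta)\ge 0$ pulls out of the inner product. Expanding what remains in the basis $\{\|\bm{g}\|^2,\bm{g}^{T}\bm{Q}\bm{g},\bm{g}^{T}\bm{Q}^{-1}\bm{g}\}$ produces a quadratic in $\tau$ whose sign I need to control.

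The main algebraic hurdle is showing that quadratic is nonpositive on $(0,1/\lambda_{\max}]$. I would simultaneously apply $\bm{g}^{T}\bm{Q}\bm{g}\le\lambda_{\max}\|\bm{g}\|^2$ and the complementary spectral bound $\bm{g}^{T}\bm{Q}^{-1}\bm{g}\ge\|\bm{g}\|^2/\lambda_{\max}$, then rescale by $s:=\tau\lambda_{\max}\in(0,1]$. The resulting upper bound factors cleanly as a nonnegative multiple of $(s-1)\bigl((1-\beta)s+\beta\bigr)$, whose two factors have opposite signs on the relevant range, so the product is nonpositive. Equality forces either $s=1$ with both spectral bounds tight (i.e.\ $\bm{g}$ aligned with the dominant eigenvector of $\bm{Q}$) or $\beta=1$ (the Newton endpoint), which is consistent with strict inequality throughout the interior of the path whenever $\tau<1/\lambda_{\max}$.
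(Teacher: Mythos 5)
Your proof is correct, and on the first segment it is exactly the paper's argument: expand $\langle-\tau\bm{g},\bm{g}-\tau\bm{Q}\bm{g}\rangle$ and apply the Rayleigh bound $\bm{g}^{T}\bm{Q}\bm{g}\le\lambda_{\max}\|\bm{g}\|^{2}$. On the segment $\alpha\in(1,2]$ your route is genuinely different, and in a useful way. You keep $\nabla q$ evaluated at the path point, use $\bm{Q}\bm{p}_{N}=-\bm{g}$ to collapse $\bm{g}+\bm{Q}\bm{p}(\alpha)$ to $(1-\beta)(\bm{I}-\tau\bm{Q})\bm{g}$, and control the sign with both spectral bounds $\bm{g}^{T}\bm{Q}\bm{g}\le\lambda_{\max}\|\bm{g}\|^{2}$ and $\bm{g}^{T}\bm{Q}^{-1}\bm{g}\ge\|\bm{g}\|^{2}/\lambda_{\max}$; indeed the exact value is $(1-\beta)\bigl[-(1-\beta)\tau\bigl(\|\bm{g}\|^{2}-\tau\bm{g}^{T}\bm{Q}\bm{g}\bigr)-\beta\bigl(\bm{g}^{T}\bm{Q}^{-1}\bm{g}-\tau\|\bm{g}\|^{2}\bigr)\bigr]$, and your bound by a nonnegative multiple of $(s-1)\bigl((1-\beta)s+\beta\bigr)$ with $s=\tau\lambda_{\max}\in(0,1]$ checks out. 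The paper instead evaluates $-\tau\|\bm{g}\|^{2}+(\alpha-1)\bigl(\tau\|\bm{g}\|^{2}-\|\bm{g}\|_{\bm{Q}^{-1}}^{2}\bigr)$, which is $\langle\bm{p}(\alpha),\bm{g}\rangle$, i.e.\ the inner product with the gradient at the current iterate rather than at $\bm{p}(\alpha)$, and so needs only the single bound on $\|\bm{g}\|_{\bm{Q}^{-1}}^{2}$ --- shorter, but it is your path-point version that the lemma's later use requires (Appendix \ref{Appendix-D} deduces $\bar{q}^{\prime}(1)=\bm{p}(\alpha)^{T}\bm{Q}\bm{p}(\alpha)+\bm{g}^{T}\bm{p}(\alpha)\le 0$ from this lemma, which is exactly the quantity you bound), so your argument is the one consistent with the lemma's downstream role. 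Your equality analysis is also sharper than the statement: at $\tau=1/\lambda_{\max}$ equality additionally requires $\bm{g}$ to lie in the dominant eigenspace of $\bm{Q}$, and at the Newton endpoint $\alpha=2$ the path-point inner product vanishes for every $\tau$, so the claimed strict inequality for $\tau<1/\lambda_{\max}$ should be read as holding on the interior $\alpha<2$.
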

\begin{proof}
    See Appendix \ref{Appendix-B}.
\end{proof}

The key to the success of an MM algorithm lies in constructing a surrogate function, serving as an upper bound of the objective function. In PDOM, the local surrogate function $m_{\alpha}$ is the projection of $m_{g}$ onto the path direction, that is 
\begin{align}
    &m_{\alpha}(\bm{x}; \bm{x}^k) 
    := q(\bm{x}^k)+
    \left\langle \bm{g}_{\alpha},\bm{x}-\bm{x}^k \right\rangle 
    + \frac{ 1 }{2 \tau_{\alpha}} 
    \left\| \bm{x} -\bm{x}^k\right\|^2
    \nonumber\\
    & 
    = q(\bm{x}^k)+\frac{ 1 }{2 \tau_{\alpha}} 
    \left\| \bm{x} -\left(\bm{x}^k+\bm{p}(\alpha)\right) \right\|^2
    - \frac{\tau_{\alpha}}{2}
    \left\| \bm{g}_{\alpha} \right\|^2,
    \label{eq:m-unscaled}
\end{align}
where, for $\alpha \in \left(0, 2 \right]$,
\begin{align}
    \bm{g}_{\alpha}
    :=  
    \frac{\langle \bm{g}, \bm{p}(\alpha) \rangle}{\|\bm{p}(\alpha)\|^2} \bm{p}(\alpha), \quad
    \tau_{\alpha}
    := - 
    \frac{ \left\| \bm{p}(\alpha) \right\|^2 }{ \left\langle \bm{g},\bm{p}(\alpha) \right\rangle }.
\label{eq:surrogate-1}
\end{align}
The step size $\tau_{\alpha}$ is allowed to surpass $\tau$.
\begin{lemma}
\label{lemma:1}
     $\tau_{\alpha}$ is an increasing function of $\alpha \in [0,2]$.
\end{lemma}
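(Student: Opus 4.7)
The plan is to split the interval $[0,2]$ according to the two segments of the dogleg path \eqref{eq:dogleg-standard}. On the first segment $\alpha \in (0,1]$, the path is constant, $\bm{p}(\alpha) = \bm{p}_{\tau} = -\tau\bm{g}$, and substituting directly into \eqref{eq:surrogate-1} gives $\tau_{\alpha} = \tau^{2}\|\bm{g}\|^{2}/(\tau\|\bm{g}\|^{2}) = \tau$ identically, so there is nothing to prove on this piece. Continuity with the second segment at $\alpha=1$ will be automatic from the formula, so the real work lies on $\alpha \in (1,2]$.

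On this segment I would reparameterize by $t = \alpha - 1 \in [0,1]$ and set $\bm{d} := \bm{p}_{N}-\bm{p}_{\tau}$, so that $\bm{p}(\alpha) = \bm{p}_{\tau} + t\bm{d}$. Write $u(t):=\|\bm{p}(\alpha)\|^{2}$ and $v(t):=-\langle\bm{g},\bm{p}(\alpha)\rangle$, so that $\tau_{\alpha} = u/v$ and, since $v>0$ by Lemma \ref{lem:zero-inner-product}, it suffices to show $u'(t)v(t)-u(t)v'(t)\geq 0$. Expanding $u$ as a quadratic in $t$ and $v$ as a linear function, and using $\bm{p}_{\tau}=-\tau\bm{g}$ together with $\langle\bm{g},\bm{p}_{N}\rangle=-\bm{g}^{T}\bm{Q}^{-1}\bm{g}$, every coefficient collapses into two scalars $p:=\tau\|\bm{g}\|^{2}$ and $r:=\bm{g}^{T}\bm{Q}^{-1}\bm{g}-\tau\|\bm{g}\|^{2}$ (together with $c:=\|\bm{d}\|^{2}$ arising in the $t^{2}$ coefficient of $u$), producing $u(t)=\tau p+2\tau r t+c t^{2}$ and $v(t)=p+r t$. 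A short algebraic manipulation then cancels the cross terms and reduces the numerator to $u'v-uv' = \tau r p + 2cp\,t + cr\,t^{2}$.

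The main obstacle — really the only nontrivial step — is to certify nonnegativity of this numerator, which is where the stipulation $\tau \in (0,1/L_{q}]$ is used. Since $L_{q} = \lambda_{\max}(\bm{Q})$ and $\bm{Q}\succ 0$, we have $\tau\bm{Q}\preceq\bm{I}$, hence $\tau\bm{I}\preceq\bm{Q}^{-1}$, which yields $r=\bm{g}^{T}(\bm{Q}^{-1}-\tau\bm{I})\bm{g}\geq 0$. With $p$, $c$, $t$ all nonnegative, every term of $\tau r p+2cp\,t+cr\,t^{2}$ is nonnegative, so $d\tau_{\alpha}/d\alpha\geq 0$ on $(1,2]$. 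Combined with the constancy on $(0,1]$ and the matching value $\tau_{1}=\tau$ from both sides, this delivers the claim that $\tau_{\alpha}$ is non-decreasing in $\alpha\in[0,2]$, with strict monotonicity on $(1,2]$ except in the degenerate case $\bm{d}=\bm{0}$ (when $\bm{g}$ is an eigenvector of $\bm{Q}$ at $\lambda_{\max}$ and $\tau=1/\lambda_{\max}$).
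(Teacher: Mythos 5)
Your proof is correct and takes essentially the same route as the paper, whose own proof merely asserts that $\frac{d}{d\alpha}\tau_{\alpha}\ge 0$ "can be proved with simple algebra"; you supply exactly that algebra, reducing the quotient-rule numerator on the second segment to $\tau r p + 2cp\,t + cr\,t^{2}$ and using $\tau\le 1/\lambda_{\max}(\bm{Q})$ to get $r=\bm{g}^{T}(\bm{Q}^{-1}-\tau\bm{I})\bm{g}\ge 0$, with $\tau_{\alpha}\equiv\tau$ on the first segment. The only tiny slip is attributing $v>0$ to Lemma \ref{lem:zero-inner-product} (which bounds $\langle\bm{p}(\alpha),\nabla q(\bm{p}(\alpha))\rangle$ rather than $\langle\bm{g},\bm{p}(\alpha)\rangle$), but your own expansion $v=p+rt$ with $p>0$ and $r\ge 0$ already gives it, so the argument stands.
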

\begin{proof}
    See Appendix \ref{Appendix-E}.
\end{proof}
\noindent In each iteration, the update rule is 
\begin{equation}
\label{eq:surrogate-1-solver}
\begin{aligned}
\bm{x}^{k+1}  &=\operatorname{prox}_{ \tau_{\alpha^{k}} h}\left(\bm{x}^k+\bm{p}(\alpha^{k})\right) \\
& :=\arg \min _{\bm{x}} h(\bm{x})+\frac{1}{2 \tau_{\alpha^{k}}}\left\|\bm{x}-\left(\bm{x}^k+\bm{p}(\alpha^{k})\right)\right\|^2.
\end{aligned}
\end{equation}
The new iterate $\bm{x}^{k+1}$ is not assured to yield a reduced objective function value, as there is no guarantee that $m_{\alpha}(\bm{x}; \bm{x}^k)$ majorizes $q(\bm{x})$ for arbitrary $\alpha$. Now we analyze the MM condition with $\alpha$ in different ranges. Firstly, when $\alpha \in (0,1]$, $m_{\alpha}$ transitions to $m_{g}$ (where $\bm{g}_{\alpha}$ becomes $\bm{g}$ and $\tau_{\alpha}$ reduces to $\tau$). By following \eqref{eq:pg-monotone}, the surrogate $m_{\alpha}$ serves as a uniform upper bound of $q$, that is 
\begin{align*}
    m_{\alpha}(\bm{x}) \ge q(\bm{x}),
        \quad \forall \bm{x} \in \operatorname{dom}f.
\end{align*}
Then, in the case of $\alpha \in (1,2]$, the surrogate $m_{\alpha}$ majorizes $q$ when both are confined along the line connecting the current iterate and the path point. This differs from the traditional MM principle in \cite{sun2016majorization,tang2023proximal,qiu2016prime} as $m_{\alpha}$ is no longer consistently above $q$. We define the concept as \textit{opportunistic majorization} (OM).
\begin{theorem}
    \label{theorem:m-alpha-upper-bound}
    For any given $\alpha \in (1,2]$, consider the line connecting $\bm{0}$ and $\bm{p}(\alpha)$ which is given by 
    \begin{align*}
        \mathcal{X}_{\alpha} :=
        \left\{ 
            \bm{x}(\beta) := \beta \bm{p}(\alpha):~
            \forall \beta \in \mathbb{R}
        \right\}.
    \end{align*}
    Define $ \bar{q}(\beta) 
         := q(\bm{x}(\beta))$ and $
        \bar{m}_{\alpha}(\beta) 
        := m_{\alpha}( \bm{x}(\beta) )$. It holds that $\bar{q}(\beta) \le \bar{m}_{\alpha}(\beta)$ for all $\beta \in \mathbb{R}$, or equivalently, $q(\bm{x}) \le m_{\alpha}(\bm{x})$ for all $\bm{x} \in \mathcal{X}_{\alpha}$.
\end{theorem}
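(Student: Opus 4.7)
The plan is to reduce the inequality $\bar q(\beta) \le \bar m_\alpha(\beta)$ to a single scalar condition on $\bm{p}(\alpha)$, and then verify that condition by a convexity argument in the dogleg interpolation parameter. First I would assume without loss of generality that $\bm{x}^k = \bm{0}$: a translation of the coordinate system leaves the Hessian $\bm{Q}$ unchanged and merely redefines the linear term (with $\bm{g} = \bm{Q}\bm{x}^k + \bm{b}$ playing the role of the new $\bm{b}$), so the majorization-along-a-line inequality is invariant under such a shift. Substituting $\bm{x} = \beta \bm{p}(\alpha)$ into the definitions of $q$ and $m_\alpha$ and exploiting the identities $\langle \bm{g}_\alpha, \bm{p}(\alpha)\rangle = \langle \bm{g}, \bm{p}(\alpha)\rangle$ and $\tau_\alpha^{-1} = -\langle \bm{g}, \bm{p}(\alpha)\rangle / \|\bm{p}(\alpha)\|^2$ built into \eqref{eq:surrogate-1}, the linear-in-$\beta$ contributions cancel and one obtains
\[
\bar m_\alpha(\beta) - \bar q(\beta) \;=\; -\frac{\beta^2}{2}\bigl(\|\bm{p}(\alpha)\|_{\bm{Q}}^2 + \langle \bm{g}, \bm{p}(\alpha)\rangle\bigr).
\]
Hence the claim for every $\beta \in \mathbb{R}$ reduces to verifying the single scalar inequality $\phi(\alpha) := \|\bm{p}(\alpha)\|_{\bm{Q}}^2 + \langle \bm{g}, \bm{p}(\alpha)\rangle \le 0$ on the segment $\alpha \in (1,2]$.

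Next, I would reparameterize by $t = \alpha - 1 \in (0,1]$ so that $\bm{p}(\alpha) = (1-t)\bm{p}_\tau + t\bm{p}_N$, and view $\phi$ as a function of $t$. Its leading quadratic coefficient is $(\bm{p}_N - \bm{p}_\tau)^T \bm{Q} (\bm{p}_N - \bm{p}_\tau) \ge 0$ since $\bm{Q} \succ 0$, so $\phi$ is convex in $t$. Evaluating at the endpoints, $\bm{p}_N = -\bm{Q}^{-1}\bm{g}$ yields $\|\bm{p}_N\|_{\bm{Q}}^2 = \bm{g}^T\bm{Q}^{-1}\bm{g} = -\langle \bm{g}, \bm{p}_N\rangle$ and hence $\phi(1) = 0$, while $\bm{p}_\tau = -\tau \bm{g}$ gives $\phi(0) = \tau\bigl(\tau\, \bm{g}^T \bm{Q}\bm{g} - \|\bm{g}\|^2\bigr) \le 0$ because $\tau \le 1/\lambda_{\max}(\bm{Q})$ implies $\tau\,\bm{g}^T\bm{Q}\bm{g} \le \|\bm{g}\|^2$. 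By convexity, $\phi(t) \le (1-t)\phi(0) + t\phi(1) \le 0$ throughout $[0,1]$, which closes the proof.

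The main obstacle is recognizing and carrying out the cancellation in the reduction step: the definitions of $\bm{g}_\alpha$ and $\tau_\alpha$ in \eqref{eq:surrogate-1} are calibrated precisely so that $\bar m_\alpha(\beta) - \bar q(\beta)$ collapses to a pure $\beta^2$-quadratic with no linear or constant remainder, which is what forces the criterion to reduce to the direction-only condition $\phi(\alpha) \le 0$ rather than a condition that must be checked for each $\beta$ separately. Once this algebraic collapse is in hand, the remaining argument is a routine one-variable convex quadratic estimate on $[0,1]$, whose only nontrivial ingredient is the choice $\tau \le 1/\lambda_{\max}(\bm{Q})$ at the gradient endpoint.
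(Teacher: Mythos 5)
Your proof is correct, but it takes a more direct route than the paper. The paper never computes the difference $\bar{m}_{\alpha}-\bar{q}$ explicitly: it instead shows that the two univariate quadratics agree in value and (negative) slope at $\beta=0$ (its Lemma \ref{lem:1}), that the minimizer of $\bar{m}_{\alpha}$ sits at $\beta=1$ while the minimizer of $\bar{q}$ lies at $\beta\ge 1$ (its Lemma \ref{lem:m-bar-f-bar}, which uses Lemma \ref{lem:zero-inner-product}), and then invokes an auxiliary comparison lemma for univariate convex quadratics (its Lemma \ref{lem:uni-variate-quadratic-bound}) to conclude domination. You bypass all of that machinery: after the coordinate shift, the calibration of $\bm{g}_{\alpha}$ and $\tau_{\alpha}$ in \eqref{eq:surrogate-1} makes $\bar{m}_{\alpha}(\beta)-\bar{q}(\beta)=-\tfrac{\beta^{2}}{2}\bigl(\|\bm{p}(\alpha)\|_{\bm{Q}}^{2}+\langle\bm{g},\bm{p}(\alpha)\rangle\bigr)$, so everything reduces to the scalar condition $\phi(\alpha)\le 0$, which is exactly the content of the paper's Lemma \ref{lem:zero-inner-product} since $\phi(\alpha)=\langle\bm{p}(\alpha),\nabla q(\bm{p}(\alpha))\rangle$ in the shifted coordinates. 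You then prove this condition yourself by convexity of $\phi$ in the dogleg parameter $t=\alpha-1$ together with the endpoint values $\phi(1)=0$ and $\phi(0)=\tau(\tau\,\bm{g}^{T}\bm{Q}\bm{g}-\|\bm{g}\|^{2})\le 0$; this is arguably cleaner than the paper's direct algebraic verification of Lemma \ref{lem:zero-inner-product} in Appendix \ref{Appendix-B}, whose displayed expression for $\alpha\in(1,2]$ omits the terms quadratic in $(\alpha-1)$ and the $\tau^{2}\|\bm{g}\|_{\bm{Q}}^{2}$ term, so your interpolation argument in effect also repairs that step. What the paper's longer route buys is a reusable geometric picture (equal tangency at the current iterate plus minimizer ordering along the path), but for this particular theorem your two-line cancellation plus one-variable convexity estimate is a complete and more elementary proof.
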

\begin{proof}
    See Appendix \ref{Appendix-D}.
\end{proof}
\noindent Proximal linesearch-type algorithms implicitly apply the OM without explicitly stating it \cite{bonettini2017convergence}. We formally state the concept of the OM and incorporate it into a Newton-type algorithm.
\subsection{Algorithm development}

Theorem \ref{theorem:m-alpha-upper-bound} implies that with the nontrivial surrogate function \eqref{eq:m-unscaled}, the majorization condition holds if the new iterate remains on the line, resulting a monotonically decreasing sequence $\left\{f\left(\bm{x}^k\right)\right\}_{k \in \mathbb{N}}$. To determine the largest value of $\alpha$ that makes the new iterate on the line, a backtracking procedure is needed. This procedure maximizes the contribution of second-order information to constitute the descent direction.

To facilitate the convergence analysis in the subsequent part (see the proof of Theorem \ref{the:summable_x}), we backtrack on $\alpha$ using \eqref{eq:m-unscaled}, while updating the iterate with a scaled surrogate function. In particular, the scaled surrogate function is
\begin{align}
    m_{\gamma,\alpha}(\bm{x};\bm{x}^k) 
    := 
    q(\bm{x}^k)+\left\langle \bm{g}_{\alpha},\bm{x}-\bm{x}^k \right\rangle 
    + \frac{ 1 }{2 \gamma \tau_{\alpha}}
    \left\| \bm{x} -\bm{x}^k\right\|^2\nonumber\\
    = q(\bm{x}^k)+\frac{ 1 }{2 \gamma \tau_{\alpha}} 
    \left\| \bm{x}- \left(\bm{x}^k + \bm{p}_{\gamma}(\alpha)\right)\right\|^2
    - \frac{\gamma \tau_{\alpha}}{2}
    \left\| \bm{g}_{\alpha} \right\|^2,
    \nonumber
\end{align}
where
\begin{align}
    \bm{p}_{\gamma}(\alpha) = \gamma \bm{p}(\alpha),
    \label{eq:dogleg-path-scaled}
\end{align}
and $\gamma \in (0,1)$ is a constant and typically set close to $1$ in numerical experiments. The new iterate is
\begin{equation}
\label{eq:update-rule}
\begin{aligned}
\bm{x}^{k+1}  &=\operatorname{prox}_{\gamma\tau_{\alpha^{k}} h}\left(\bm{x}^k+\bm{p}_{\gamma}(\alpha^{k})\right) \\
& :=\arg \min _{\bm{x}} h(\bm{x})+\frac{1}{2 \gamma \tau_{\alpha^{k}}}\left\|\bm{x}-\left(\bm{x}^k+\bm{p}_{\gamma}(\alpha^{k})\right)\right\|^2.
\end{aligned}
\end{equation}

The PDOM is terminated when it approaches a critical point $\bm{x}^{\star}$ where $\bm{0} \in \partial f(\bm{x}^{\star})$. From the optimality condition of the proximal operator \eqref{eq:update-rule}, it holds that 
\begin{equation}
    \label{eq:optimality-condition-1}
    \bm{0} \in \frac{1}{\gamma \tau_{\alpha^{k}}} \left(\bm{x}^{k+1}-\bm{x}^{k}-\bm{p}_{\gamma}(\alpha^{k})\right)+\partial h(\bm{x}^{k+1}).
\end{equation}
This implies 
\begin{align}
    & \partial f(\bm{x}^{k+1})
    = \nabla q(\bm{x}^{k+1})+\partial h(\bm{x}^{k+1})
    \nonumber
    \\
    &\ni \nabla q(\bm{x}^{k+1})+\frac{1}{\gamma \tau_{\alpha^{k}}} \left(\bm{x}^{k}+\bm{p}_{\gamma}(\alpha^{k})-\bm{x}^{k+1}\right)\nonumber
    \\
    &= \left(\nabla q(\bm{x}^{k+1})-\bm{g}_{\alpha^{k}}\right) - \frac{1}{\gamma \tau_{\alpha^{k}}}\left(\bm{x}^{k+1}-\bm{x}^{k}\right).
    \label{eq:stopping-criterion}
\end{align}
PDOM terminates when $\| \partial f(\bm{x}^{k+1}) \|$ is sufficiently small: 
\begin{align}
    \label{eq:stop-standard}
    \left\|\partial f(\bm{x}^{k+1})\right\| \leq 
    &\sqrt{n} \epsilon^{\mathrm{abs}}+\epsilon^{\mathrm{rel}} \max \{\left\|\nabla q(\bm{x}^{k+1})\right\|,\left\|\bm{g}_{\alpha^{k}}\right\|,\nonumber \\
    &\frac{1}{\gamma \tau_{\alpha^{k}}}\left\|\bm{x}^{k+1}\right\|, \frac{1}{\gamma \tau_{\alpha^{k}}}\left\|\bm{x}^{k}\right\|\},   
\end{align}
where $n$ is the dimension of $\bm{x}$, $\epsilon^{\mathrm{abs}}>0$ and $\epsilon^{\mathrm{rel}}>0$ are two small positive constants (motivated by \cite[Section 3.3]{boyd2011distributed}). 

This stopping criterion is different from directly using $\|\bm{x}^{k+1}-\bm{x}^{k}\|$, commonly adopted for proximal algorithms \cite{lee2014proximal}. The relationship between these two different stopping criteria can be roughly quantified by the triangle inequality 
\begin{equation}
    \left\|\partial f(\bm{x}^{k+1})\right\| 
    \leq 
    \left\|\nabla q(\bm{x}^{k+1})-\bm{g}_{\alpha^{k}}\right\|
    + \frac{1}{\gamma \tau_{\alpha^{k}}}\left\|\bm{x}^{k+1}-\bm{x}^{k}\right\|.
    \label{eq:stopping-criteria-comparison}
\end{equation}
As $1 / \tau_{\alpha^{k}}$ in \eqref{eq:stopping-criteria-comparison} can be very large, small value of $\|\bm{x}^{k+1}-\bm{x}^{k}\|$ does not necessarily imply getting close to a critical point. 

Now we formally present the PDOM in Algorithm \ref{alg:POM}. To track the largest $\alpha^{k}$, we employ a similarly straightforward strategy as presented in \cite{stella2017simple}.

\begin{algorithm}
    \caption{PDOM Algorithm}
    \label{alg:POM}
    \begin{algorithmic}[1]
    \STATE Input: $\bm{x}^{0} \in \mathbb{R}^{n}$, $\bm{Q}^{-1} \in \mathbb{R}^{n \times n}$, $\tau \in (0,1/L_{q})$, $\gamma \in (0,1)$, $\epsilon^{\mathrm{abs}}, \epsilon^{\mathrm{rel}}>0$, $k = 0$.
    \WHILE{the stopping criterion \eqref{eq:stop-standard} is not satisfied}
    \STATE Compute $\bm{x}^{k+1}$ using \eqref{eq:update-rule}, for the largest value $\alpha^{k} \in\left\{1+(1 / 2)^i \mid i \in \mathbb{N}\right\}$ such that $m_{\alpha^{k}}(\bm{x}^{k+1}; \bm{x}^k)\ge q(\bm{x}^{k+1})$.
    \STATE Compute $\bm{v}^{k+1}$ using \eqref{eq:standard-prox} and if $f(\bm{x}^{k+1})>f(\bm{v}^{k+1})$, set $\bm{x}^{k+1} = \bm{v}^{k+1}$.
    \STATE $k \gets k+1$.
    \ENDWHILE
    \STATE Output: $\bm{x}^{k}$
    \end{algorithmic}
\end{algorithm}

\begin{figure}[htbp]
    \centering
    \includegraphics[width=\linewidth]{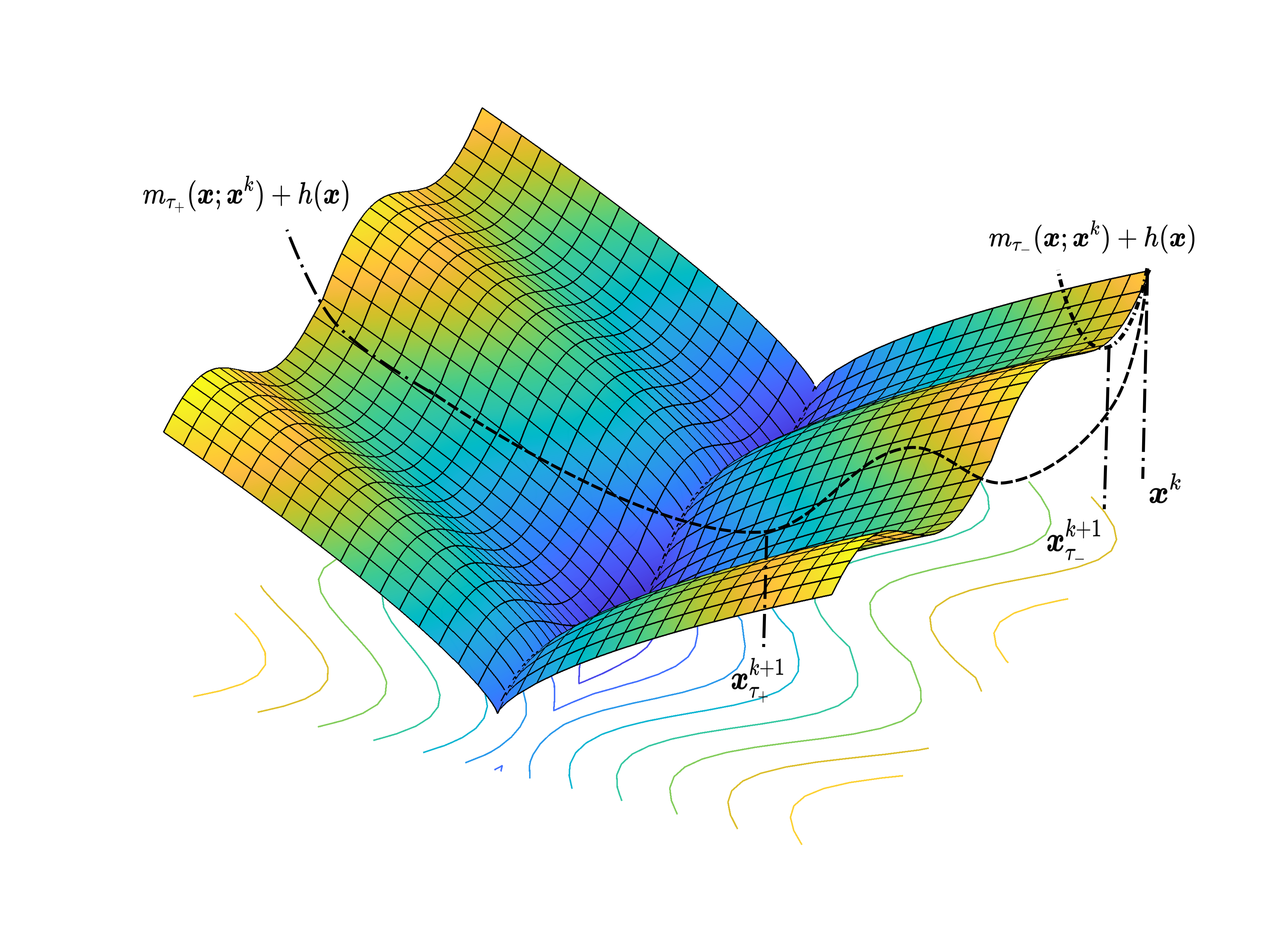}
    \caption{The configurations of global majorization ($\tau_{-}$) and opportunistic majorization ($\tau_{+}$) surrogate functions.}
    \label{fig:enter-label-2}
\end{figure}

PDOM admits the following benefits. It relaxes the global majorization to opportunistic majorization where the surrogate is only required to locally majorize the objective function. It makes the surrogate more flat and closely aligns with the characteristics of $f$ (also due to the larger step size), thus achieving a faster convergence and giving better solutions (see Figure \ref{fig:enter-label-2}). Meanwhile, it belongs to the second-order algorithm but Newton direction is readily obtained. Unlike quasi-Newton benchmarks where the Hessian approximation and Newton direction are updated at each iteration, PDOM only requires one matrix inversion. To optimize computational efficiency, a maximum searching step can be set in the backtracking process. Specifically, setting $\alpha^{k} = 1$ when $i$ exceeds certain number.

\subsection{The convergence and convergence rate analysis}
In this subsection, we establish the convergence of the iterates generated by the PDOM to a critical point of 
$f(\cdot)$ and provide the local convergence rate under the KL assumption. We first present the monotonicity of the objective function.

\begin{theorem}
\label{theo:4}
    The sequence $\left\{f(\bm{x}^k)\right\}_{k \in \mathbb{N}}$ generated by Algorithm \ref{alg:POM} is monotonically decreasing, i.e., it satisfies $f\left(\bm{x}^{k+1}\right) \le f\left(\bm{x}^{k}\right)$. 
\end{theorem}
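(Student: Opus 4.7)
The plan is to establish descent by the chain of inequalities
\begin{align*}
f(\bm{x}^{k+1})
&\;\le\; m_{\alpha^k}(\bm{x}^{k+1};\bm{x}^k) + h(\bm{x}^{k+1}) \\
&\;\le\; m_{\gamma,\alpha^k}(\bm{x}^{k+1};\bm{x}^k) + h(\bm{x}^{k+1}) \\
&\;\le\; m_{\gamma,\alpha^k}(\bm{x}^{k};\bm{x}^k) + h(\bm{x}^{k}) \;=\; f(\bm{x}^{k}),
\end{align*}
in which the first inequality is the very condition enforced by the backtracking loop in line 3 (after adding $h(\bm{x}^{k+1})$ to both sides), the second uses $\gamma \in (0,1)$ to compare the unscaled and scaled surrogates, and the third uses the optimality of $\bm{x}^{k+1}$ in the scaled proximal subproblem \eqref{eq:update-rule}.

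Before invoking the first inequality, I would establish that the backtracking loop terminates in finitely many steps, so that an admissible $\alpha^k$ exists within the grid $\{1+(1/2)^i : i\in\mathbb{N}\}$. As $\alpha \to 1^+$, Lemma \ref{lemma:1} gives $\tau_\alpha \downarrow \tau$ and the dogleg point $\bm{p}(\alpha)$ converges to $\bm{p}_\tau$; hence $m_\alpha(\cdot;\bm{x}^k)$ converges pointwise to the standard PG surrogate $m_g(\cdot;\bm{x}^k)$, and the iterate defined by \eqref{eq:update-rule} converges to a standard PG step with step size $\gamma\tau$. Because $\tau<1/L_q$, the descent argument already spelled out in \eqref{eq:pg-monotone} yields $m_g(\cdot;\bm{x}^k)\ge q(\cdot)$ globally, so continuity forces the backtracking criterion $m_{\alpha^k}(\bm{x}^{k+1};\bm{x}^k)\ge q(\bm{x}^{k+1})$ to hold once $\alpha^k$ lies sufficiently close to $1$.

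The remaining two inequalities are essentially bookkeeping. For the second, $m_\alpha$ and $m_{\gamma,\alpha}$ differ only in the quadratic coefficient $\tfrac{1}{2\tau_\alpha}$ versus $\tfrac{1}{2\gamma\tau_\alpha}$; since $\gamma\in(0,1)$ makes the latter strictly larger, we have $m_{\gamma,\alpha}(\bm{x};\bm{x}^k)\ge m_\alpha(\bm{x};\bm{x}^k)$ for every $\bm{x}$. For the third, \eqref{eq:update-rule} states that $\bm{x}^{k+1}$ is the global minimizer of $m_{\gamma,\alpha^k}(\cdot;\bm{x}^k)+h(\cdot)$, so its value there is no greater than the value at $\bm{x}^k$; the identification with $f(\bm{x}^k)$ then follows from $m_{\gamma,\alpha^k}(\bm{x}^k;\bm{x}^k)=q(\bm{x}^k)$.

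Finally, line 4 of Algorithm \ref{alg:POM} is a post-hoc safeguard that replaces $\bm{x}^{k+1}$ with $\bm{v}^{k+1}$ only when $f(\bm{v}^{k+1})<f(\bm{x}^{k+1})$, so the monotonicity already proved is never undone. The main subtle point in the whole argument is the mismatch between the surrogate used in the backtracking test ($m_\alpha$) and the surrogate driving the update ($m_{\gamma,\alpha}$); the nonnegative gap $m_{\gamma,\alpha}-m_\alpha$ afforded by $\gamma<1$ is exactly what bridges the two and closes the chain.
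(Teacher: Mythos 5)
Your chain is exactly the paper's proof read in reverse: the first inequality is the paper's step (c) (backtracking majorization along the accepted $\alpha^k$), the second is its step (b) (since $\gamma\in(0,1)$ makes the quadratic coefficient of $m_{\gamma,\alpha}$ larger than that of $m_{\alpha}$), and the third is its step (a) (optimality of $\bm{x}^{k+1}$ in \eqref{eq:update-rule} together with $m_{\gamma,\alpha^k}(\bm{x}^k;\bm{x}^k)=q(\bm{x}^k)$), plus the same observation that line 4 of Algorithm \ref{alg:POM} can only decrease $f$ further. Your additional discussion of finite termination of the backtracking loop goes beyond what the paper proves and is a reasonable supplement (though for nonconvex $h$ the prox solution need not vary continuously with $\alpha$, so the cleanest justification is the fallback $\alpha^k=1$, for which $m_{\alpha}=m_g$ majorizes $q$ globally); it does not affect the correctness of the main argument, which matches the paper's.
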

\begin{proof}
See Appendix \ref{Appendix-G}. 
\end{proof}

We now present the main results in Theorem \ref{the:summable_x}.

% \begin{assumption}
% \label{assump:1}
%     Let $\bm{e}^k = \bm{g}_{\alpha^{k}} - \nabla q(\bm{x}^{k})$. Then, the sequence  $\left\{\|\bm{e}^k\|\right\}_{k \in \mathbb{N}}$ converges to $0$.
% \end{assumption}

\begin{theorem}
\label{the:summable_x}
    Suppose that $f$ is lower-bounded, that $q$ is a quadratic function, that $h$ is a lower semicontinuous function, that $\left\{\bm{x}^k\right\}_{k \in \mathbb{N}}$ is a sequence generated by the PDOM algorithm, then let $\left\{\bm{x}^k\right\}_{k \in \mathbb{N}}$ converge to $\bm{x}^{\star}$, we have $\bm{0} \in f(\bm{x}^{\star})$, i.e., $\bm{x}^{\star}$ is a critical point.
\end{theorem}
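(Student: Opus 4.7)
The strategy is to route the argument through the proximal-gradient (PG) safeguard iterate $\bm{v}^{k+1}$ introduced in line~4 of Algorithm~\ref{alg:POM}, rather than through the dogleg iterate itself. The motivation is that the dogleg subgradient inclusion \eqref{eq:stopping-criterion} contains the projected term $\bm{g}_{\alpha^{k}}$, whose residual with respect to $\nabla q(\bm{x}^{k})$ need not vanish in the limit, whereas $\bm{v}^{k+1}$ carries the standard PG subgradient whose norm is controlled solely by $\|\bm{v}^{k+1}-\bm{x}^{k}\|$.

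First, combining the descent lemma for $q$ with the prox optimality of $\bm{v}^{k+1}$ (tested against $\bm{x}^{k}$) yields the classical PG decrease $f(\bm{x}^{k})-f(\bm{v}^{k+1}) \geq \tfrac{1-\tau L_{q}}{2\tau}\|\bm{v}^{k+1}-\bm{x}^{k}\|^{2}$. Since $f(\bm{x}^{k+1})\leq f(\bm{v}^{k+1})$ by line~4 and Theorem~\ref{theo:4} provides monotonicity, the lower-boundedness of $f$ makes $\sum_{k}\|\bm{v}^{k+1}-\bm{x}^{k}\|^{2}$ finite; hence $\bm{v}^{k+1}-\bm{x}^{k}\to\bm{0}$, and together with $\bm{x}^{k}\to\bm{x}^{\star}$ this forces $\bm{v}^{k+1}\to\bm{x}^{\star}$.

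Second, the optimality condition of the PG subproblem \eqref{eq:standard-prox} together with the identity $\nabla q(\bm{v}^{k+1})-\nabla q(\bm{x}^{k})=\bm{Q}(\bm{v}^{k+1}-\bm{x}^{k})$ furnishes an explicit subgradient element $\bm{w}^{k+1}:=\bigl(\bm{Q}-\tfrac{1}{\tau}\bm{I}\bigr)(\bm{v}^{k+1}-\bm{x}^{k})\in\partial f(\bm{v}^{k+1})$ whose norm is bounded by $(\|\bm{Q}\|+1/\tau)\|\bm{v}^{k+1}-\bm{x}^{k}\|\to 0$.

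The step I expect to be most delicate is verifying $f(\bm{v}^{k+1})\to f(\bm{x}^{\star})$, which is required before the closedness of the limiting subdifferential in \eqref{eq:sub-definition} can be invoked. I would plug $\bm{x}^{\star}$ as a test point into the prox defining $\bm{v}^{k+1}$: since both $\|\bm{v}^{k+1}-(\bm{x}^{k}-\tau\nabla q(\bm{x}^{k}))\|$ and $\|\bm{x}^{\star}-(\bm{x}^{k}-\tau\nabla q(\bm{x}^{k}))\|$ converge to $\tau\|\nabla q(\bm{x}^{\star})\|$, the quadratic terms on the two sides cancel in the limit and yield $\limsup_{k} h(\bm{v}^{k+1})\leq h(\bm{x}^{\star})$. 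Combined with lower semicontinuity of $h$ this gives $h(\bm{v}^{k+1})\to h(\bm{x}^{\star})$, and continuity of the quadratic $q$ then delivers $f(\bm{v}^{k+1})\to f(\bm{x}^{\star})$. With $\bm{v}^{k+1}\to\bm{x}^{\star}$, $f(\bm{v}^{k+1})\to f(\bm{x}^{\star})$, $\bm{w}^{k+1}\in\partial f(\bm{v}^{k+1})$, and $\bm{w}^{k+1}\to\bm{0}$, the closedness property of $\partial f$ forces $\bm{0}\in\partial f(\bm{x}^{\star})$, completing the argument.
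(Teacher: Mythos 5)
Your proposal is correct and follows essentially the same route as the paper: the paper's proof simply defers to the mAPG argument it cites (\cite{li2015accelerated}), which is precisely the safeguard-step argument you spell out — sufficient decrease at the proximal-gradient iterate $\bm{v}^{k+1}$, the explicit subgradient $\bigl(\bm{Q}-\tfrac{1}{\tau}\bm{I}\bigr)(\bm{v}^{k+1}-\bm{x}^k)\to\bm{0}$, convergence of the function values via testing the prox against $\bm{x}^{\star}$, and closedness of the limiting subdifferential. The only (welcome) difference is that you derive $\|\bm{v}^{k+1}-\bm{x}^k\|\to 0$ directly from the classical PG decrease together with the acceptance rule $f(\bm{x}^{k+1})\le f(\bm{v}^{k+1})$, making the argument self-contained, whereas the paper routes this through Lemmas \ref{lem:tau-bound} and \ref{lem:summable_x} (the dogleg-based decrease \eqref{eq:f-x-bound}) and an external citation for \eqref{eq:v_x_summable}.
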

\begin{proof}
The proof of the theorem can be established by considering Lemma \ref{lem:tau-bound} and Lemma \ref{lem:summable_x}, following the approach outlined in \cite[Theorem 1]{li2015accelerated}.
\end{proof}

\begin{lemma}
    \label{lem:tau-bound}
    The sequence $\left\{\tau_{\alpha^k}\right\}_{k \in \mathbb{N}}$ is bounded.
\end{lemma}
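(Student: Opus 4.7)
The plan is to prove the uniform bound by exploiting the explicit form of $\tau_{\alpha}$ and the fact that the backtracking procedure in Step 3 of Algorithm \ref{alg:POM} restricts $\alpha^k$ to the interval $(1,2]$. Concretely, since $\alpha^k \in \{1+(1/2)^i: i \in \mathbb{N}\} \subseteq (1,2]$, and since Lemma \ref{lemma:1} asserts that $\tau_{\alpha}$ is monotonically increasing in $\alpha$ on $[0,2]$, we immediately get
\begin{equation*}
\tau \;=\; \tau_1 \;\le\; \tau_{\alpha^k} \;\le\; \tau_2
\end{equation*}
for every $k$ (and when Step 4 replaces $\bm{x}^{k+1}$ by the gradient iterate, the associated effective step is just $\tau$, which fits in the same interval). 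So the whole task reduces to producing a finite upper bound on $\tau_2$ that is independent of $k$.

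Next I would evaluate $\tau_2$ explicitly. At $\alpha = 2$, the dogleg path \eqref{eq:dogleg-standard} gives $\bm{p}(2)=\bm{p}_N = -\bm{Q}^{-1}\bm{g}^k$, where $\bm{g}^k := \nabla q(\bm{x}^k)$. Substituting into the definition \eqref{eq:surrogate-1}:
\begin{equation*}
\tau_2 \;=\; -\frac{\|\bm{Q}^{-1}\bm{g}^k\|^2}{\langle \bm{g}^k, -\bm{Q}^{-1}\bm{g}^k\rangle} \;=\; \frac{\|\bm{y}^k\|^2}{(\bm{y}^k)^{T}\bm{Q}\bm{y}^k},
\end{equation*}
where I set $\bm{y}^k := \bm{Q}^{-1}\bm{g}^k$ in the last equality, using $\bm{g}^k = \bm{Q}\bm{y}^k$ in the denominator. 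Now the positive-definiteness assumption $\bm{Q}\succ 0$ yields $\bm{Q} \succeq \lambda_{\min}(\bm{Q})\,\bm{I}$, so $(\bm{y}^k)^{T}\bm{Q}\bm{y}^k \ge \lambda_{\min}(\bm{Q})\,\|\bm{y}^k\|^2$ and hence
\begin{equation*}
\tau_2 \;\le\; \frac{1}{\lambda_{\min}(\bm{Q})}.
\end{equation*}
The right-hand side depends only on $\bm{Q}$, not on the iterate $\bm{x}^k$, so combining with the monotonicity step gives $\tau \le \tau_{\alpha^k} \le 1/\lambda_{\min}(\bm{Q})$ uniformly in $k$, which is precisely the boundedness claim.

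There is no serious obstacle here; the computation is essentially a one-liner once the correct objects are identified. The only subtlety to handle carefully is the degenerate case $\bm{g}^k = \bm{0}$, where the denominator in \eqref{eq:surrogate-1} vanishes and $\tau_{\alpha}$ is formally undefined; I would treat this by noting that in this case the proximal update is independent of the step size so we may set $\tau_{\alpha^k}$ to any convenient value in $[\tau, 1/\lambda_{\min}(\bm{Q})]$ without affecting the iterates, and the lemma still holds. The only other thing worth checking is that the small remark about Step 4 (the safeguard swap to the pure gradient step $\bm{v}^{k+1}$) does not break the bound, but this is immediate since that branch corresponds to the value $\tau$ which already lies inside the derived interval.
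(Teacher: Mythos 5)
Your proof is correct, but it takes a somewhat different route than the paper. The paper proves the bound by expanding $\tau_{\alpha^k}$ directly for a general $\alpha^k$: it writes $\tau_{\alpha^k} = -\|\bm{p}(\alpha^k)\|^2/\langle \nabla q(\bm{x}^k),\bm{p}(\alpha^k)\rangle$, substitutes the dogleg form of $\bm{p}(\alpha^k)$ so that the denominator becomes a combination of $\tau\|\nabla q(\bm{x}^k)\|^2$ and $\|\nabla q(\bm{x}^k)\|^2_{\bm{Q}^{-1}}$, and then invokes the bounded eigenvalues of $\bm{Q}$ to control the ratio. You instead shortcut through Lemma \ref{lemma:1}: since the backtracking keeps $\alpha^k \in (1,2]$ and $\tau_{\alpha}$ is nondecreasing in $\alpha$, it suffices to bound $\tau_2$, which you evaluate explicitly as $\|\bm{Q}^{-1}\bm{g}^k\|^2/\bigl((\bm{Q}^{-1}\bm{g}^k)^{T}\bm{Q}(\bm{Q}^{-1}\bm{g}^k)\bigr) \le 1/\lambda_{\min}(\bm{Q})$. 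This buys you something the paper's write-up does not state cleanly: an explicit, $k$-independent interval $[\tau,\,1/\lambda_{\min}(\bm{Q})]$ containing every $\tau_{\alpha^k}$, and it avoids the (slightly loose) phrasing in the paper that ``numerator and denominator are both bounded'' — which by itself would not bound the ratio; what is really needed, and what both arguments deliver, is that the denominator scales like $\|\bm{g}^k\|^2$ via $\lambda_{\min}$, $\lambda_{\max}$. One small inaccuracy in your side remark: when $\bm{g}^k=\bm{0}$ the update \eqref{eq:update-rule} is \emph{not} independent of the step size, since the prox parameter $\gamma\tau_{\alpha^k}$ still enters $\operatorname{prox}_{\gamma\tau_{\alpha^k}h}(\bm{x}^k)$; the honest statement is simply that $\tau_{\alpha}$ is undefined there and both your argument and the paper's implicitly assume $\nabla q(\bm{x}^k)\neq\bm{0}$, so this does not affect the lemma as used.
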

\begin{proof}
    See Appendix \ref{Appendix-H}.
\end{proof}
\begin{lemma}
    \label{lem:summable_x}
    Suppose that $\left\{\bm{x}^k\right\}_{k \in \mathbb{N}}$ is a sequence generated by Algorithm \ref{alg:POM}, then it holds that
    \begin{equation}
    \label{eq:x_x_summable}
    \lim _{k \rightarrow \infty}\left\|\bm{x}^{k+1}-\bm{x}^k\right\|^2 \rightarrow 0.
    \end{equation}
\end{lemma}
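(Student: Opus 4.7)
The plan is to establish a \emph{sufficient decrease} inequality of the form
$f(\bm{x}^k)-f(\bm{x}^{k+1}) \ge c\,\|\bm{x}^{k+1}-\bm{x}^k\|^2$
for a uniform constant $c>0$, then telescope and use the lower boundedness of $f$ to obtain square summability of the successive differences, which immediately implies \eqref{eq:x_x_summable}.

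First I would handle the main branch where $\bm{x}^{k+1}$ is the output of the scaled proximal step \eqref{eq:update-rule}. Writing $\bm{d}:=\bm{x}^{k+1}-\bm{x}^k$ and using the optimality of $\bm{x}^{k+1}$ as minimizer of $h(\cdot)+\tfrac{1}{2\gamma\tau_{\alpha^k}}\|\cdot-(\bm{x}^k+\bm{p}_\gamma(\alpha^k))\|^2$, testing against $\bm{y}=\bm{x}^k$ and expanding the squares yields
\begin{equation*}
h(\bm{x}^{k+1})-h(\bm{x}^k) \le \tfrac{1}{\gamma\tau_{\alpha^k}}\langle\bm{p}_\gamma(\alpha^k),\bm{d}\rangle-\tfrac{1}{2\gamma\tau_{\alpha^k}}\|\bm{d}\|^2.
\end{equation*}
Since $\bm{p}_\gamma(\alpha^k)=\gamma\bm{p}(\alpha^k)$ and the identities in \eqref{eq:surrogate-1} give $\tfrac{1}{\tau_{\alpha^k}}\bm{p}(\alpha^k)=-\bm{g}_{\alpha^k}$, the cross term collapses to $-\langle\bm{g}_{\alpha^k},\bm{d}\rangle$. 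The backtracking rule in Step 3 of Algorithm \ref{alg:POM} enforces $m_{\alpha^k}(\bm{x}^{k+1};\bm{x}^k)\ge q(\bm{x}^{k+1})$, which rewritten is
\begin{equation*}
q(\bm{x}^{k+1})-q(\bm{x}^k) \le \langle\bm{g}_{\alpha^k},\bm{d}\rangle+\tfrac{1}{2\tau_{\alpha^k}}\|\bm{d}\|^2.
\end{equation*}
Adding the two inequalities cancels the $\langle\bm{g}_{\alpha^k},\bm{d}\rangle$ terms and leaves
\begin{equation*}
f(\bm{x}^k)-f(\bm{x}^{k+1}) \ge \tfrac{1-\gamma}{2\gamma\tau_{\alpha^k}}\|\bm{d}\|^2.
\end{equation*}
Invoking Lemma \ref{lem:tau-bound} to bound $\tau_{\alpha^k}\le\tau_{\max}$ produces a uniform lower constant $c_1:=\tfrac{1-\gamma}{2\gamma\tau_{\max}}>0$.

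Next I need to cover the possibility that Step 4 replaces $\bm{x}^{k+1}$ by the plain proximal-gradient iterate $\bm{v}^{k+1}$. In that case the standard descent lemma for PG with step size $\tau<1/L_q$ (proximal optimality tested at $\bm{x}^k$, combined with the $L_q$-Lipschitz upper bound on $q$) gives
\begin{equation*}
f(\bm{x}^k)-f(\bm{v}^{k+1}) \ge \tfrac{1-\tau L_q}{2\tau}\|\bm{v}^{k+1}-\bm{x}^k\|^2,
\end{equation*}
so a sufficient decrease with constant $c_2:=\tfrac{1-\tau L_q}{2\tau}>0$ holds for the swapped iterate. Taking $c:=\min(c_1,c_2)>0$ gives the unified inequality $f(\bm{x}^k)-f(\bm{x}^{k+1})\ge c\|\bm{x}^{k+1}-\bm{x}^k\|^2$ in either branch.

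Finally, telescoping from $0$ to $N$ yields $\sum_{k=0}^{N}\|\bm{x}^{k+1}-\bm{x}^k\|^2\le \tfrac{1}{c}\bigl(f(\bm{x}^0)-f(\bm{x}^{N+1})\bigr)\le \tfrac{1}{c}\bigl(f(\bm{x}^0)-\inf f\bigr)<\infty$, whence $\sum_{k=0}^{\infty}\|\bm{x}^{k+1}-\bm{x}^k\|^2<\infty$ and in particular $\|\bm{x}^{k+1}-\bm{x}^k\|^2\to 0$. The main obstacle I expect is the bookkeeping of the two surrogates: the iterate is defined by the \emph{scaled} surrogate $m_{\gamma,\alpha^k}$ while the backtracking test uses the \emph{unscaled} $m_{\alpha^k}$, and it is precisely the mismatch $\gamma<1$ between the two step sizes that produces the strictly positive descent constant $\tfrac{1-\gamma}{2\gamma\tau_{\alpha^k}}$. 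Verifying this interplay carefully is the only nontrivial part; the rest is a telescoping argument.
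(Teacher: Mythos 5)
Your proof is correct and follows essentially the same route as the paper: the same two inequalities (the proximal optimality of the scaled step tested at $\bm{x}^k$ and the backtracking majorization $m_{\alpha^k}(\bm{x}^{k+1};\bm{x}^k)\ge q(\bm{x}^{k+1})$) are added to produce the sufficient-decrease bound $f(\bm{x}^k)-f(\bm{x}^{k+1})\ge\bigl(\tfrac{1}{2\gamma\tau_{\alpha^k}}-\tfrac{1}{2\tau_{\alpha^k}}\bigr)\|\bm{x}^{k+1}-\bm{x}^k\|^2$, which is then telescoped using the boundedness of $\tau_{\alpha^k}$ (Lemma \ref{lem:tau-bound}) and the lower bound on $f$. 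The only difference is minor: you handle the Step-4 swap to $\bm{v}^{k+1}$ self-containedly via the standard proximal-gradient sufficient decrease, whereas the paper defers that case to an external reference.
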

\begin{proof}
    See Appendix \ref{Appendix-Y}.
\end{proof}

\label{subsec:rate-analysis}

Now, we establish the local convergence rate of the PDOM based on the KL property. We first prove $\bm{g}_{\alpha^{k}}$ becomes the gradient direction of $q$ as $k \rightarrow \infty$.

\begin{lemma}
\label{lem:relative-error}
Let $\bm{e}^k = \bm{g}_{\alpha^{k}} - \nabla q(\bm{x}^{k})$. Then, it holds that
\begin{align}
\|\partial f\left(\bm{x}^{k+1}\right)\| \le \left(\frac{1}{\gamma \tau_{\alpha^{k}}}+L_{q}\right)\left\|\left(\bm{x}^{k+1}-\bm{x}^k\right)\right\| + \|\bm{e}^k\|,
\label{eq:optimality-f-v-2}
\end{align}
and the sequence $\left\{\|\bm{e}^k\|\right\}_{k \in \mathbb{N}}$ converges to $0$ as $k \rightarrow \infty$.
\end{lemma}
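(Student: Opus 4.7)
The plan is to prove the two assertions separately, using the prox optimality condition together with the preceding lemmas.

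For the bound on $\|\partial f(\bm{x}^{k+1})\|$, I would start from the first-order optimality condition of the proximal step in \eqref{eq:update-rule}. Since $\bm{p}_\gamma(\alpha^k)=-\gamma\tau_{\alpha^k}\bm{g}_{\alpha^k}$, this yields the existence of some $\bm{v}^{k+1}\in\partial h(\bm{x}^{k+1})$ satisfying $\bm{v}^{k+1}+\bm{g}_{\alpha^k}+\frac{1}{\gamma\tau_{\alpha^k}}(\bm{x}^{k+1}-\bm{x}^k)=\bm{0}$. Adding $\nabla q(\bm{x}^{k+1})$ to both sides produces an explicit element $\bm{w}^{k+1}\in\partial f(\bm{x}^{k+1})$. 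Decomposing $\nabla q(\bm{x}^{k+1})-\bm{g}_{\alpha^k}=[\nabla q(\bm{x}^{k+1})-\nabla q(\bm{x}^k)]-\bm{e}^k$, bounding the first bracket by $L_q\|\bm{x}^{k+1}-\bm{x}^k\|$ via Lipschitz continuity of $\nabla q$, and applying the triangle inequality to $\|\bm{w}^{k+1}\|$ yields \eqref{eq:optimality-f-v-2} upon noting $\|\partial f(\bm{x}^{k+1})\|\le\|\bm{w}^{k+1}\|$.

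For the convergence $\|\bm{e}^k\|\to 0$, the same optimality relation gives $\|\bm{g}_{\alpha^k}+\bm{v}^{k+1}\|=\frac{1}{\gamma\tau_{\alpha^k}}\|\bm{x}^{k+1}-\bm{x}^k\|$, which vanishes thanks to Lemma \ref{lem:summable_x} and the two-sided bound on $\tau_{\alpha^k}$ furnished by Lemma \ref{lemma:1} and Lemma \ref{lem:tau-bound}. The argument then splits according to step 4 of Algorithm \ref{alg:POM}: whenever the safety net overrides the dogleg iterate with the PG step, the effective $\alpha^k$ becomes $1$ and $\bm{g}_{\alpha^k}=\nabla q(\bm{x}^k)$, so $\bm{e}^k=\bm{0}$; whenever the dogleg iterate is retained, I would combine the vanishing of $\bm{g}_{\alpha^k}+\bm{v}^{k+1}$ with the closed-graph property of $\partial h$ and the critical-point condition $-\nabla q(\bm{x}^*)\in\partial h(\bm{x}^*)$ at any accumulation point $\bm{x}^*$ (recall this lemma is used only in the local-rate analysis, where convergence of $\{\bm{x}^k\}$ to a critical point is already in hand) to conclude $\bm{g}_{\alpha^k}\to\nabla q(\bm{x}^k)$, and hence $\bm{e}^k\to\bm{0}$.

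I expect the main obstacle to be this second part. The projection $\bm{g}_{\alpha^k}$ of $\nabla q(\bm{x}^k)$ onto the dogleg direction does not in general coincide with $\nabla q(\bm{x}^k)$ at an arbitrary critical point unless $\alpha^k$ tends to $1$ or $\nabla q(\bm{x}^k)$ is aligned with an eigenvector of $\bm{Q}$. Closing this gap will require leveraging either the safety net in step 4 or a careful analysis showing that the backtracking in step 3 implicitly drives $\alpha^k$ toward $1$ as the iterates approach a critical point, so that $\bm{g}_{\alpha^k}$ is asymptotically parallel to $\bm{p}_\tau^k=-\tau\nabla q(\bm{x}^k)$.
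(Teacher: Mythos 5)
Your handling of the first claim is exactly the paper's argument (Appendix \ref{Appendix-J}): take the optimality condition of the prox step \eqref{eq:update-rule}, exhibit the element $\nabla q(\bm{x}^{k+1})-\bm{g}_{\alpha^k}-\frac{1}{\gamma\tau_{\alpha^k}}(\bm{x}^{k+1}-\bm{x}^k)$ of $\partial f(\bm{x}^{k+1})$, split $\nabla q(\bm{x}^{k+1})-\bm{g}_{\alpha^k}$ into the Lipschitz piece and $\bm{e}^k$, and apply the triangle inequality. (One wrinkle that both you and the paper pass over: when step 4 of Algorithm \ref{alg:POM} replaces $\bm{x}^{k+1}$ by the PG point, the available optimality condition is that of \eqref{eq:standard-prox} with step $\tau$, not \eqref{eq:update-rule}, and $\bm{e}^k$ in the lemma is still defined through the backtracked $\alpha^k$; a short case distinction is needed to keep \eqref{eq:optimality-f-v-2} literally as stated.)

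The genuine gap is the second claim, $\|\bm{e}^k\|\to 0$, and you say so yourself: your proposal ends by naming the obstacle rather than closing it. The fallback you sketch cannot work as stated. From $\bm{g}_{\alpha^k}+\bm{v}^{k+1}\to\bm{0}$ with $\bm{v}^{k+1}\in\partial h(\bm{x}^{k+1})$, closedness of $\partial h$ and criticality of the limit only give that $-\lim\bm{g}_{\alpha^k}$ (along a convergent subsequence) is \emph{some} element of $\partial h(\bm{x}^{\star})$; since $\partial h(\bm{x}^{\star})$ is in general set-valued, the fact that $-\nabla q(\bm{x}^{\star})$ is also an element does not pin $\bm{g}_{\alpha^k}$ to $\nabla q(\bm{x}^k)$. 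Likewise, the backtracking test $m_{\alpha^k}(\bm{x}^{k+1};\bm{x}^k)\ge q(\bm{x}^{k+1})$ only constrains $\langle\bm{e}^k,\bm{x}^{k+1}-\bm{x}^k\rangle$ up to terms of order $\|\bm{x}^{k+1}-\bm{x}^k\|^2$, so vanishing step lengths alone do not force $\bm{e}^k$ small unless $\alpha^k\to 1$ or $\nabla q(\bm{x}^k)$ becomes asymptotically parallel to the dogleg direction (e.g., aligns with an eigenvector of $\bm{Q}$) --- precisely the alternative you identify. For comparison, the paper's own proof of this half is the single sentence that the claim follows from \eqref{eq:the5-1} and Theorem \ref{the:summable_x}, i.e., from $\|\bm{x}^{k+1}-\bm{x}^k\|\to 0$ and convergence to a critical point; it supplies no mechanism excluding the scenario you describe, so it is no more detailed than your attempt. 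In short: part one of your proposal matches the paper; part two is incomplete, and the missing ingredient is an argument (via the step-4 safeguard, a cap forcing $\alpha^k=1$ eventually, or an alignment/angle analysis of $\bm{p}(\alpha^k)$ near $\bm{x}^{\star}$) that the projection error of $\nabla q(\bm{x}^k)$ onto the dogleg direction actually vanishes.
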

\begin{proof}
    See Appendix \ref{Appendix-J}.
\end{proof}

\begin{theorem}
\label{theo:KL-global-convergence}
    Suppose that $f$ satisfies the KL property on $\omega\left(\bm{x}^k\right)$ which is the cluster point set of $\left\{\bm{x}^k\right\}_{k \in \mathbb{N}}$, then the \vspace{0.2em} sequence $\left\{\bm{x}^k\right\}_{k \in \mathbb{N}}$ generated by Algorithm \ref{alg:POM} has summable residuals, $\sum_{k=0}^{\infty}\left\|\bm{x}^{k+1}-\bm{x}^k\right\|<\infty$.
\end{theorem}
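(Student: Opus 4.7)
The plan is to carry out the standard three-ingredient KL-based summability argument, à la Attouch-Bolte-Svaiter, with an extra step to handle the error sequence $\{\|\bm{e}^k\|\}$ appearing in Lemma \ref{lem:relative-error}. The three ingredients are: (i) a quantitative descent $f(\bm{x}^k)-f(\bm{x}^{k+1})\ge a\|\bm{x}^{k+1}-\bm{x}^k\|^2$ for a uniform $a>0$; (ii) the relative-error bound $\mathrm{dist}(0,\partial f(\bm{x}^{k+1}))\le b\|\bm{x}^{k+1}-\bm{x}^k\|+\|\bm{e}^k\|$ with uniform $b$; and (iii) the uniformized KL inequality on the cluster set $\omega(\bm{x}^k)$.

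For (i), Theorem \ref{theo:4} only asserts monotonicity, so I would sharpen it by exploiting the gap between the scaled surrogate $m_{\gamma,\alpha^k}$ (used to update $\bm{x}^{k+1}$) and the unscaled surrogate $m_{\alpha^k}$ (whose majorization the backtracking enforces). The factor $\gamma\in(0,1)$ leaves an explicit residual proportional to $\tfrac{1-\gamma}{2\gamma\tau_{\alpha^k}}\|\bm{x}^{k+1}-\bm{x}^k\|^2$ when one combines the scaled-proximal optimality with the surrogate inequality; Lemma \ref{lem:tau-bound} then delivers a uniform $a$. Item (ii) is Lemma \ref{lem:relative-error}, again with uniform $b$ from Lemma \ref{lem:tau-bound}. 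For (iii), Lemma \ref{lem:summable_x} and monotonicity of $f(\bm{x}^k)$ imply that $\omega(\bm{x}^k)$ is nonempty and compact, that $f$ is constant on it with some value $f^*$, and that $f(\bm{x}^k)\downarrow f^*$; the KL hypothesis then uniformizes, via the Bolte-Sabach-Teboulle lemma, to a single desingularizing $\psi$ and a neighborhood valid for all large $k$, giving $\psi'(f(\bm{x}^k)-f^*)\,\mathrm{dist}(0,\partial f(\bm{x}^k))\ge 1$.

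Setting $\Delta_k:=\psi(f(\bm{x}^k)-f^*)$ and using concavity of $\psi$, the classical chain
\begin{align*}
\Delta_k-\Delta_{k+1}
&\ge \psi'(f(\bm{x}^k)-f^*)\bigl(f(\bm{x}^k)-f(\bm{x}^{k+1})\bigr) \\
&\ge \frac{a\|\bm{x}^{k+1}-\bm{x}^k\|^2}{b\|\bm{x}^k-\bm{x}^{k-1}\|+\|\bm{e}^{k-1}\|}
\end{align*}
holds for all large $k$. The AM-GM inequality $2\sqrt{uv}\le u+v$ then produces
\begin{align*}
\|\bm{x}^{k+1}-\bm{x}^k\|\le \tfrac12\|\bm{x}^k-\bm{x}^{k-1}\|+c_1\|\bm{e}^{k-1}\|+c_2(\Delta_k-\Delta_{k+1}),
\end{align*}
after which summation absorbs the Cauchy-like contraction $\tfrac12\|\bm{x}^k-\bm{x}^{k-1}\|$ into the left-hand side, the telescoping $\sum(\Delta_k-\Delta_{k+1})\le\Delta_{k_0}$ closes, and we conclude $\sum_k\|\bm{x}^{k+1}-\bm{x}^k\|<\infty$ provided $\sum\|\bm{e}^k\|<\infty$.

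The main obstacle is precisely this last proviso, since Lemma \ref{lem:relative-error} only guarantees $\|\bm{e}^k\|\to 0$, not summability. I would address it by exploiting the explicit structure $\bm{e}^k=\bm{g}_{\alpha^k}-\nabla q(\bm{x}^k)$, where $\bm{g}_{\alpha^k}$ is the orthogonal projection of $\nabla q(\bm{x}^k)$ onto $\mathrm{span}\{\bm{p}(\alpha^k)\}\subseteq\mathrm{span}\{\nabla q(\bm{x}^k),\bm{Q}^{-1}\nabla q(\bm{x}^k)\}$: this makes $\bm{e}^k=\bm{0}$ whenever the pure-gradient step is effectively taken (either $\alpha^k\to 1$ in the backtracking sequence or the safeguard Step~4 of Algorithm \ref{alg:POM} selects $\bm{v}^{k+1}$), and otherwise bounds $\|\bm{e}^k\|$ by the sine of the angle between $\bm{p}(\alpha^k)$ and $\nabla q(\bm{x}^k)$ times $\|\nabla q(\bm{x}^k)\|$. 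A compactness argument along $\omega(\bm{x}^k)$, together with the fallback safeguard, should show $\|\bm{e}^k\|=O(\|\bm{x}^k-\bm{x}^{k-1}\|)$ for all sufficiently large $k$, absorbing the error term into the $b$-term and closing the summability.
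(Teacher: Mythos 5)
Your overall skeleton coincides with the paper's own argument: the paper proves this theorem by invoking the Attouch--Bolte--Svaiter descent framework (their Theorem~2.9) with exactly the two ingredients you assemble, namely the sufficient-decrease inequality \eqref{eq:f-x-bound} (obtained, as you say, from the $\gamma$-gap between the scaled surrogate $m_{\gamma,\alpha}$ and the unscaled $m_{\alpha}$, made uniform via Lemma~\ref{lem:tau-bound}) and the relative-error bound \eqref{eq:optimality-f-v-2} of Lemma~\ref{lem:relative-error}, combined with the uniformized KL inequality on the compact cluster set $\omega(\bm{x}^k)$. Up to the concavity/AM--GM chain your write-up is a more explicit version of what the paper leaves to the cited reference, and you are right that the additive error $\|\bm{e}^k\|$ is the point where the standard recipe does not close automatically.

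The gap is in your proposed repair of that point. The claim $\|\bm{e}^k\|=O(\|\bm{x}^k-\bm{x}^{k-1}\|)$ for all large $k$ is not justified and is in general false. Since $\bm{g}_{\alpha^k}$ is the orthogonal projection of $\nabla q(\bm{x}^k)$ onto $\operatorname{span}\{\bm{p}(\alpha^k)\}$, the error $\bm{e}^k$ is the component of $\nabla q(\bm{x}^k)$ orthogonal to the accepted direction; its size is governed by $\|\nabla q(\bm{x}^k)\|$ and by the angle between $\nabla q(\bm{x}^k)$ and $\bm{p}(\alpha^k)$, neither of which bears any a priori relation to the step length. At a critical point one only has $\bm{0}\in\nabla q(\bm{x}^{\star})+\partial h(\bm{x}^{\star})$, so $\nabla q(\bm{x}^{\star})$ is generically nonzero (e.g.\ its off-support components in the $\ell_0$ problems); if the backtracking keeps accepting values of $\alpha^k$ bounded away from $1$ (note $\alpha^k\in\{1+(1/2)^i\}$ never equals $1$, and Step~4 of Algorithm~\ref{alg:POM} is taken only when $\bm{v}^{k+1}$ happens to give a lower objective, which is not guaranteed for all large $k$), then $\|\bm{e}^k\|$ tends to the nonzero norm of the component of $\nabla q(\bm{x}^{\star})$ orthogonal to the limiting direction while $\|\bm{x}^k-\bm{x}^{k-1}\|\to 0$, so your bound cannot hold and the summation does not close. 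Compactness of $\omega(\bm{x}^k)$ gives no control over the accepted $\alpha^k$ or over this angle, so the proposed "compactness plus safeguard" argument does not deliver the needed estimate. To be fair, the paper's own treatment is also thin here: it leans on Lemma~\ref{lem:relative-error}'s assertion that $\|\bm{e}^k\|\to 0$ and, in the rate proof of Appendix~\ref{Appendix-K}, simply takes $\|\bm{e}^{k-1}\|=0$ for large $k$; but as a self-contained proof, your proposal needs either $\sum_k\|\bm{e}^k\|<\infty$ or a bound of the form $\|\bm{e}^k\|\le c\|\bm{x}^{k+1}-\bm{x}^k\|$ derived from the backtracking/safeguard mechanism, and that is precisely what is missing.
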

\begin{proof}
    Following the same procedure as the one in \cite[Theorem 2.9]{attouch2013convergence} and considering \eqref{eq:optimality-f-v-2} and \eqref{eq:f-x-bound}, one can easily show that the sequence $\left\{\bm{x}^k\right\}_{k \in \mathbb{N}}$ has a finite length.
\end{proof}

\begin{table*}[ht]
\centering
\begin{subtable}{1\textwidth}
\vspace{1em}
\centering
\begin{tabular}{|c|c|c|c|c|c|c|}
\hline$m$ & \begin{tabular}{c} 
$\lambda$ \\
$\times \left|\bm{A}^{T}\bm{y}\right|_{\infty}$
\end{tabular} & \begin{tabular}{c} 
PG \\
NRE/\#Iter.
\end{tabular} & \begin{tabular}{c} 
mAPG \\
NRE/\#Iter.
\end{tabular} & \begin{tabular}{c} 
PANOC \\
NRE/\#Iter.
\end{tabular} & \begin{tabular}{c} 
PANOCplus \\
NRE/\#Iter.
\end{tabular} & \begin{tabular}{c} 
PDOM \\
NRE/\#Iter.
\end{tabular} \\
\hline 
100 & \begin{tabular}{c} 
0.01 \\
0.05\\
0.10
\end{tabular} & \begin{tabular}{c} 
6.3394/533.4\\
2.57611/518.0\\
0.3002/488.4
\end{tabular} & \begin{tabular}{c} 
6.5293/843.6\\
0.6942/230.8\\
0.2908/163.2
\end{tabular} & \begin{tabular}{c} 
6.5756/101.7\\
0.7964/144.1\\
0.3034/110.7
\end{tabular} & \begin{tabular}{c} 
6.4264/119.5\\
0.3502/137.8\\
0.2916/99.7
\end{tabular} & \begin{tabular}{c} 
\textbf{9.909e-15/31.2}\\
\textbf{1.175e-10/43.4}\\
\textbf{1.453e-11/40.8}
\end{tabular}\\
\hline  
500 & \begin{tabular}{c} 
0.01 \\
0.05\\
0.10
\end{tabular} & \begin{tabular}{c} 
6.3945/1184\\
1.5339/1034\\
0.62053/417.4
\end{tabular} & \begin{tabular}{c} 
6.6079/1324\\
0.0402/336.3\\
0.5778/220.3
\end{tabular} & \begin{tabular}{c} 
6.6533/132.8\\
0.1981/275.8\\
0.5199/157.7
\end{tabular} & \begin{tabular}{c} 
6.6486/129.5\\
0.07990/248.7\\
0.6133/141.5
\end{tabular} & \begin{tabular}{c} 
\textbf{1.0495e-11/52.6}\\
\textbf{4.7643e-10/83.7}\\
0.08219/60.4
\end{tabular}\\
\hline 
1000 & \begin{tabular}{c} 
0.01 \\
0.05\\
0.10
\end{tabular} & \begin{tabular}{c} 
6.3138/$>$2000\\
0.2037/867.8\\
0.7338/412.3
\end{tabular} & \begin{tabular}{c} 
6.5169/1305.8\\
0.0671/359.6\\
0.7097/257.2
\end{tabular} & \begin{tabular}{c} 
6.5809/138.3\\
0.1507/315.5\\
0.7100/204.9
\end{tabular} & \begin{tabular}{c} 
6.5672/148.2\\
0.0974/274.6\\
0.7241/195.3
\end{tabular} & \begin{tabular}{c} 
\textbf{1.5031e-10/41.9}\\
\textbf{3.6950e-10/64.4}\\
0.3854/59.6
\end{tabular}\\
\hline
\end{tabular}
\end{subtable}
    \caption{Average recovery error and number of iterations required to reach $\|\partial f\|<10^{-5}$ for 20 independent trials across 9 instances, each characterized by different values of $m$ and $\lambda$. Sparsity level = $0.01m$.}
    \label{tab:1}
\end{table*}

\begin{theorem}
\label{theo:KL-convergence-rate}
Let $\left\{\bm{x}^k\right\}_{k \in \mathbb{N}}$ \vspace{0.1em} be any sequence generated by Algorithm \ref{alg:POM}. Suppose that $f$ satisfies the KL property on the cluster points of $\left\{\bm{x}^k\right\}_{k \in \mathbb{N}}$ with exponent $\theta \in(0,1)$, then $\left\{\bm{x}^k\right\}_{k \in \mathbb{N}}$ converges to $\bm{x}^{\star}$ such that $0 \in \partial f\left(\bm{x}^{\star}\right)$ and the following inequalities hold
\begin{enumerate}
\item for any large enough $k$, when $\theta \in\left(0, \frac{1}{2}\right)$, given any $\xi \in (0,1)$, it holds that
\begin{equation}
\label{eq:KL-convergence-rate-case-1}
\left\|\bm{x}^{k+1}-\bm{x}^{\star}\right\| \leq \xi\left\|\bm{x}^{k}-\bm{x}^{\star}\right\|^{\frac{1}{2\theta}}
\end{equation}
\item for any large enough $k$, when $\theta = \frac{1}{2}$, there exist $\kappa>0$ and $\varrho \in(0,1)$, it holds that
\begin{equation}
\label{eq:KL-convergence-rate-case-2}
\left\|\bm{x}^k-\bm{x}^{\star}\right\| \leq \sum_{i=k}^{\infty}\left\|\bm{x}^{i+1}-\bm{x}^i\right\| \leq \kappa \varrho^k
\end{equation}
\item for any large enough $k$, when $\theta \in  (\frac{1}{2},1)$, there exist $\kappa>0$, it holds that
\begin{equation}
\label{eq:KL-convergence-rate-case-3}
\left\|\bm{x}^k-\bm{x}^{\star}\right\| \leq \sum_{i=k}^{\infty}\left\|\bm{x}^{i+1}-\bm{x}^i\right\| \leq \kappa k^{\frac{1-\theta}{1-2\theta}}
\end{equation}
\end{enumerate}
\end{theorem}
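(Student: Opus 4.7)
My plan is to follow the Attouch--Bolte framework for KL-based convergence rates, adapted to PDOM through the relative-error estimate in Lemma \ref{lem:relative-error}. Let $r_k := f(\bm{x}^k) - f(\bm{x}^{\star})$, which is nonnegative and monotonically decreasing to zero by Theorem \ref{theo:4}, and let $S_k := \sum_{i \ge k}\|\bm{x}^{i+1}-\bm{x}^i\| < \infty$ by Theorem \ref{theo:KL-global-convergence}, so that $\|\bm{x}^k-\bm{x}^{\star}\| \le S_k$. The finite-length proof in Theorem \ref{theo:KL-global-convergence}, based on the desingularizing function $\phi(t) = \frac{C}{1-\theta} t^{1-\theta}$, also delivers a length bound $S_k \le A\, r_k^{1-\theta}$ for $k$ sufficiently large. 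This inequality is the bridge I will use to translate any rate on $r_k$ into a rate on $\|\bm{x}^k - \bm{x}^{\star}\|$.

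The engine is a single scalar recursion on $r_k$. The sufficient-descent inequality $r_k - r_{k+1} \ge c_1\|\bm{x}^{k+1}-\bm{x}^k\|^2$ produced in the proof of Theorem \ref{theo:4} (Appendix \ref{Appendix-G}) gives $\|\bm{x}^{k+1}-\bm{x}^k\| \le \sqrt{(r_k - r_{k+1})/c_1}$. Combining the KL inequality $r_{k+1}^\theta \le c_2 \|\partial f(\bm{x}^{k+1})\|$, valid eventually since $\bm{x}^k \to \bm{x}^{\star}$, with Lemma \ref{lem:relative-error} and the boundedness of $\tau_{\alpha^k}$ from Lemma \ref{lem:tau-bound}, yields
\begin{equation*}
r_{k+1}^\theta \le c_3\sqrt{r_k - r_{k+1}} + c_2\|\bm{e}^k\|.
\end{equation*}
After absorbing $\|\bm{e}^k\|$ into the square-root term (addressed below), squaring produces the clean recursion
\begin{equation*}
r_{k+1}^{2\theta} \le c_4\,(r_k - r_{k+1}).
\end{equation*}

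The three cases of the theorem then follow by standard analyses of this recursion. For $\theta = 1/2$, it becomes $(1+c_4)\,r_{k+1} \le c_4\, r_k$, so $r_k \le \kappa_0\varrho_0^k$ geometrically, and $\|\bm{x}^k-\bm{x}^{\star}\|\le A\, r_k^{1/2} \le \kappa\varrho^k$ with $\varrho = \varrho_0^{1/2}$, establishing \eqref{eq:KL-convergence-rate-case-2}. For $\theta \in (1/2, 1)$, the exponent $2\theta > 1$ and a Chung-type lemma for recurrences $a_{k+1}^p \le C(a_k - a_{k+1})$ with $p > 1$ yields $r_k = O(k^{-1/(2\theta-1)})$, whence $\|\bm{x}^k-\bm{x}^{\star}\| \le A\, r_k^{1-\theta} \le \kappa k^{(1-\theta)/(1-2\theta)}$, which is \eqref{eq:KL-convergence-rate-case-3}. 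For $\theta \in (0, 1/2)$, bounding $r_k - r_{k+1} \le r_k$ in the recursion gives $Q$-superlinear decay $r_{k+1} \le c_5\, r_k^{1/(2\theta)}$ of order $1/(2\theta) > 1$; combining $\|\bm{x}^{k+1}-\bm{x}^{\star}\|\le A\, r_{k+1}^{1-\theta}$ with a reverse Łojasiewicz-type bound $r_k \le C\|\bm{x}^k-\bm{x}^{\star}\|^{1/(1-\theta)}$, obtainable here because $q$ is quadratic, converts this into $\|\bm{x}^{k+1}-\bm{x}^{\star}\|\le c_6\|\bm{x}^k-\bm{x}^{\star}\|^{1/(2\theta)}$; the flexibility in the constant $\xi$ in \eqref{eq:KL-convergence-rate-case-1} then follows from a finer asymptotic argument that exploits the superlinearity of $r_k$ to drive the leading constant down the tail.

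The main obstacle will be the quantitative absorption of $\|\bm{e}^k\|$, since Lemma \ref{lem:relative-error} only delivers the qualitative statement $\|\bm{e}^k\|\to 0$. The argument must return to the projection definition \eqref{eq:surrogate-1}, which identifies $\bm{e}^k = \bm{g}_{\alpha^k} - \nabla q(\bm{x}^k)$ as essentially the component of $\bm{g}^k$ governed by the angle between $\bm{g}^k$ and $\bm{p}(\alpha^k)$, and use the update rule \eqref{eq:update-rule} together with Lemma \ref{lem:tau-bound} to establish an estimate $\|\bm{e}^k\| \le c_e \|\bm{x}^{k+1}-\bm{x}^k\|$ for $k$ large; this is enough to collapse $\|\bm{e}^k\|$ into the $\sqrt{r_k - r_{k+1}}$ term via sufficient descent. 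A secondary difficulty, confined to the superlinear case, is the reverse function-to-distance bound $r_k \le C\|\bm{x}^k-\bm{x}^{\star}\|^{1/(1-\theta)}$; this is not automatic under generic KL, but is accessible here because $|q(\bm{x})-q(\bm{x}^{\star})| = O(\|\bm{x}-\bm{x}^{\star}\|^2)$ and the residual $h$-contribution can be controlled via subgradient inequalities combined with the KL property applied in both directions.
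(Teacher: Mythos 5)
Your overall architecture --- a scalar recursion $r_{k+1}^{2\theta}\le c_4\,(r_k-r_{k+1})$ on function values combined with the length bound $S_k\lesssim \psi(r_k)$ --- is the classical Attouch--Bolte route; it does deliver cases 2) and 3) in the standard way. The paper, however, follows \cite{qian2022superlinear} and only details case 1), running the recursion directly on the step lengths: combining \eqref{eq:optimality-f-v-2} with the sufficient decrease \eqref{eq:f-x-bound} gives $\|\bm{x}^{k+1}-\bm{x}^k\|\le \mathcal{B}^{1/(2\theta)}\|\bm{x}^k-\bm{x}^{k-1}\|^{1/(2\theta)}+\dots$, which is iterated into a doubly-exponential tail bound, summed, and converted to $\|\bm{x}^{k+1}-\bm{x}^{\star}\|$ by the triangle inequality; no function-value-to-distance conversion ever appears. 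This difference is not cosmetic, because your case 1) hinges on the reverse bound $r_k\le C\|\bm{x}^k-\bm{x}^{\star}\|^{1/(1-\theta)}$, and that is a genuine gap: for the regularizers this paper actually targets ($\ell_0$, rank indicator), $h$ is discontinuous, so no upper H\"older bound of $f-f(\bm{x}^{\star})$ by a power of the distance holds on a neighborhood of $\bm{x}^{\star}$; and along the sequence the prox inequality only yields roughly $r_{k+1}\lesssim\|\bm{x}^{k+1}-\bm{x}^{\star}\|$ (exponent $1$), which, pushed through your chain, gives the exponent $(1-\theta)/(2\theta)$ instead of the claimed $1/(2\theta)$ in \eqref{eq:KL-convergence-rate-case-1}. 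As written, your case 1) proves a weaker superlinear rate than the theorem states, whereas the paper's summation argument never needs such a bound.

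The second unproven ingredient is the quantitative absorption of $\|\bm{e}^k\|$. You propose $\|\bm{e}^k\|\le c_e\|\bm{x}^{k+1}-\bm{x}^k\|$ for large $k$, but from \eqref{eq:surrogate-1}, $\bm{e}^k$ is (minus) the component of $\nabla q(\bm{x}^k)$ orthogonal to $\bm{p}(\alpha^k)$, so $\|\bm{e}^k\|$ is of order $\|\nabla q(\bm{x}^k)\|\sin\phi_k$ with $\phi_k$ the angle between the gradient and the dogleg direction. Since $\nabla q(\bm{x}^k)\to\nabla q(\bm{x}^{\star})$, which is generically nonzero at a critical point of $f$ (only $-\nabla q(\bm{x}^{\star})\in\partial h(\bm{x}^{\star})$), neither factor is controlled by the step length; the estimate would require showing $\alpha^k\to 1$ or an angle condition, and it does not follow from the update rule \eqref{eq:update-rule} and Lemma \ref{lem:tau-bound} as you suggest. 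The paper handles this point differently: in Appendix \ref{Appendix-K} it takes $\|\bm{e}^{k-1}\|$ to vanish for all large $k$, invoking Lemma \ref{lem:relative-error} and Theorem \ref{theo:KL-global-convergence}. Whatever one thinks of that step, your proposed substitute is not justified, and together with the missing reverse bound it leaves case 1) of the proposal unclosed.
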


\begin{proof}
    The proof technique follows the route in \cite{qian2022superlinear}. We present the proof detail in Appendix \ref{Appendix-K} for the case $\theta \in (0,\frac{1}{2})$, because the relation \eqref{eq:KL-convergence-rate-case-1} implies that the PDOM algorithm enjoys a local Q-superlinear convergence rate which differs from the local convergence rate analysis based on KL property in \cite{attouch2013convergence,stella2017forward,li2015accelerated,themelis2018forward}.
\end{proof}

The practical local convergence rate of the PDOM algorithm is determined by the value of the Łojasiewicz exponent $\theta$. To build the connection between the theoretical convergence rate analysis and the practical performance, we include the $\theta$ values for vector-sparsity promoting regularizers, e.g., $\ell_{0}$ pseudo-norm. Moreover, we derive the $\theta$ value for problems of promoting the matrix structure sparsity, e.g., RPCA, a novel addition to the existing literature.

\begin{proposition}
     The zero-norm, $\ell_{0}$, composite optimization problems have the $\theta$ value of $\frac{1}{2}$ \cite{wu2021kurdyka}. 
\end{proposition}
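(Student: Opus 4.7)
The plan is to exploit the piecewise-polynomial structure of $f(\bm{x}) = q(\bm{x}) + \lambda \|\bm{x}\|_0$. On each \emph{sparsity cell} $\mathcal{C}_S := \{\bm{x} \in \mathbb{R}^n : \operatorname{supp}(\bm{x}) = S\}$ the $\ell_0$ term is constant equal to $|S|$, so $f$ agrees on $\mathcal{C}_S$ with the quadratic polynomial $q(\bm{x}) + \lambda |S|$. Since every quadratic has KL exponent $\tfrac{1}{2}$ at each of its critical points, the task reduces to showing that this exponent is preserved when the cells are ``glued'' together around a critical point $\bm{x}^{\star}$ of $f$, which is exactly the route taken in \cite{wu2021kurdyka}.

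First I would characterize $\partial f(\bm{x}^{\star})$: denoting $S^{\star} = \operatorname{supp}(\bm{x}^{\star})$, the limiting subdifferential of $\|\cdot\|_0$ at $\bm{x}^{\star}$ is supported on the coordinate directions indexed by $[n] \setminus S^{\star}$, so $\bm{0} \in \partial f(\bm{x}^{\star})$ reduces to $(\bm{Q}\bm{x}^{\star} + \bm{b})_i = 0$ for all $i \in S^{\star}$. Next, for $\bm{x} \in \mathcal{B}_\rho(\bm{x}^{\star})$ with $\rho < \tfrac{1}{2}\min_{i \in S^{\star}} |x^{\star}_i|$, the support of $\bm{x}$ must contain $S^{\star}$, so $\bm{x}$ lies in some cell $\mathcal{C}_S$ with $S \supseteq S^{\star}$. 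Restricted to the subspace $\mathbb{R}^S$, the function is a genuine quadratic, and the classical Łojasiewicz inequality for polynomials of degree two yields
\begin{equation}
\operatorname{dist}\bigl(0, \partial f(\bm{x})\bigr) \;\geq\; C_S \bigl(f(\bm{x}) - f(\bm{x}^{\star})\bigr)^{1/2}
\end{equation}
with a constant $C_S > 0$ determined by the principal submatrix $\bm{Q}_{S,S}$.

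The main obstacle is to combine these cell-wise estimates into a single uniform bound valid across all $S \supseteq S^{\star}$ simultaneously. Because there are only finitely many such $S$ (at most $2^{n - |S^{\star}|}$), one takes $C := \min_S C_S > 0$; the relation $|S| \geq |S^{\star}|$ further guarantees that the $\ell_0$ contribution to $f(\bm{x}) - f(\bm{x}^{\star})$ is nonnegative on every cell, so no sign issue arises in the inequality. A subtle point is the behaviour at transitional points where some coordinate of $\bm{x}$ crosses zero: there the limiting subdifferential of $\|\cdot\|_0$ contains arbitrarily large components in that direction, which only inflates $\operatorname{dist}(0, \partial f(\bm{x}))$, making the KL inequality strictly easier to satisfy. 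Putting these pieces together gives the KL property at $\bm{x}^{\star}$ with exponent $\theta = \tfrac{1}{2}$, and since $\bm{x}^{\star}$ was arbitrary the same exponent holds throughout $\operatorname{dom} \partial f$.
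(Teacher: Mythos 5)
The paper itself offers no proof of this proposition --- it is simply quoted from \cite{wu2021kurdyka} --- so your argument must stand on its own. Your skeleton (decompose a neighborhood of a critical point $\bm{x}^{\star}$ into sparsity cells, reduce to quadratics, exploit that there are finitely many cells) is the right one, but there is a genuine gap in how you treat the cells with strictly larger support. You assert that on every cell $\mathcal{C}_S$ with $S \supseteq S^{\star}$ the classical Łojasiewicz inequality gives $\operatorname{dist}(0,\partial f(\bm{x})) \geq C_S\,(f(\bm{x})-f(\bm{x}^{\star}))^{1/2}$ with the reference value $f(\bm{x}^{\star})$. For $S \supsetneq S^{\star}$ this is false in general: the Łojasiewicz inequality of the quadratic $q+\lambda|S|$ on that cell is relative to its \emph{own} critical value, which differs from $f(\bm{x}^{\star})$ by at least $\lambda$. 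Concretely, take $n=2$, $q(\bm{x})=\tfrac{1}{2}(x_1-1)^2+\tfrac{1}{2}x_2^2$, $\lambda=1$, $\bm{x}^{\star}=(1,0)$; for $\bm{x}=(1,t)$ with small $t\neq 0$ one has $\operatorname{dist}(0,\partial f(\bm{x}))=|t|\to 0$ while $f(\bm{x})-f(\bm{x}^{\star})=\tfrac{1}{2}t^2+1\to 1$, so no $C_S>0$ works on that cell near $\bm{x}^{\star}$ and your $C:=\min_S C_S$ is vacuous. The proposition is nevertheless true; the missing ingredient is the threshold $\eta$ in the KL definition. Because the $\ell_0$ term jumps by at least $\lambda$ on any cell with $S\supsetneq S^{\star}$, choosing $\rho$ small enough that $|q(\bm{x})-q(\bm{x}^{\star})|<\lambda/2$ on $\mathcal{B}_\rho(\bm{x}^{\star})$ and taking $\eta<\lambda/2$ places every such point outside the set $\{\bm{x}: f(\bm{x}^{\star})<f(\bm{x})<f(\bm{x}^{\star})+\eta\}$, so only the cell $S=S^{\star}$ is relevant, and there your restricted-quadratic argument (with $\bm{Q}_{S^{\star},S^{\star}}\succ 0$ and $(\nabla q(\bm{x}^{\star}))_{S^{\star}}=\bm{0}$) is correct.

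A secondary slip: at points where some coordinate vanishes, the free components of $\partial(\lambda\|\cdot\|_0)$ do not ``inflate'' $\operatorname{dist}(0,\partial f(\bm{x}))$. The distance is an infimum over the set, and a free component can cancel the corresponding entry of $\nabla q(\bm{x})$, so it can only decrease the distance; indeed $\operatorname{dist}(0,\partial f(\bm{x}))=\|(\nabla q(\bm{x}))_{\operatorname{supp}(\bm{x})}\|$. Such points are already handled correctly as members of the smaller-support cell, but the stated reasoning points in the wrong direction and should be removed.
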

\begin{proposition}
\label{pro:rpca-theta}
    The RPCA problem has the $\theta$ value of $1-\frac{1}{4.9^{\upsilon}}$ where $\upsilon$ is a non-negative constant associated with the rank constraint and the dimensions of the rank-constrained matrix.
\end{proposition}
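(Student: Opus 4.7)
The plan is to establish the exponent by decomposing the RPCA objective in \eqref{eq:RPCA} into constituent pieces whose KL exponents are either classical or computable in closed form, then combining them via the calculus rules developed in \cite{li2018calculus,yu2022kurdyka}. A typical RPCA formulation comprises a smooth quadratic fidelity term, a sparsity penalty on an additive outlier component (e.g.\ an $\ell_1$ or $\ell_0$ piece), and a low-rank-inducing piece (e.g.\ the indicator of a set of matrices with rank bounded by some $r$). Since the quadratic is a real-analytic polynomial with Łojasiewicz exponent $\tfrac{1}{2}$, and the sparsity term has exponent $\tfrac{1}{2}$ by the preceding proposition and \cite{wu2021kurdyka}, the nontrivial work concentrates on the low-rank piece.

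The central step is therefore to compute the KL exponent of the rank-constraint indicator $\delta_{\{\bm{L}:\operatorname{rank}(\bm{L})\le r\}}$ at a critical point $\bm{L}^\star$. The rank-bounded set is a semi-algebraic variety admitting a natural stratification by exact rank, with local geometry near $\bm{L}^\star$ controlled by the singular value decomposition of $\bm{L}^\star$ and the slack between $\operatorname{rank}(\bm{L}^\star)$ and $r$. I would exploit this stratification to derive an explicit local Łojasiewicz inequality by inductively descending through the strata, with the parameter $\upsilon$ capturing the number of stratum layers between $\bm{L}^\star$ and the ambient matrix space (a function of the matrix dimensions and the rank gap). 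The factor $4.9^{\upsilon}$ is expected to arise because each stratum descent contracts $1-\theta$ by a fixed ratio, accumulating geometrically across layers.

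With the exponent of the rank indicator in hand, I would apply the sum rule for KL exponents (e.g.\ \cite[Theorem 3.3]{li2018calculus}): under a constraint qualification that is verifiable for the separable splitting in \eqref{eq:RPCA}, the exponent of the composite objective equals the largest exponent among its components. Since the rank-indicator exponent strictly exceeds $\tfrac{1}{2}$ for every $\upsilon \ge 1$, it dominates the quadratic and sparsity contributions and yields the claimed value $\theta = 1 - \frac{1}{4.9^{\upsilon}}$.

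The hard part will be the exponent computation for the rank-constraint indicator, which demands effective Łojasiewicz bounds tied to the degree and codimension of each rank stratum, together with an explicit identification of $\upsilon$ with a precise geometric quantity determined by the matrix dimensions and the rank bound. A secondary obstacle is verifying the constraint qualification required by the sum rule, so as to ensure that the smooth quadratic term does not perturb the exponent inherited from the rank piece at the joint critical point; this typically reduces to checking a transversality condition between the gradient of the quadratic and the normal cone to the rank variety at $\bm{L}^\star$.
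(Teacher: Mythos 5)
There is a genuine gap, in two places. First, the heart of your plan---computing the KL exponent of the rank-constraint indicator $\delta_{\operatorname{rank}\le r}$ via a stratification of the rank variety, with the factor $4.9^{\upsilon}$ ``expected'' to arise from a fixed contraction of $1-\theta$ at each stratum descent---is not a proof but a conjecture about where the constant comes from, and it does not match how this exponent is actually obtained. The paper does not derive it at all: it cites the known result in \cite[Section 5.3]{yu2022kurdyka}, where the exponent $1-\frac{1}{4.9^{\upsilon}}$ comes from effective Łojasiewicz-type bounds for a polynomial (lifted/factorized) description of the rank-constrained set, and $\upsilon = mn+m(m-r)+n(m-r)-1$ counts the variables of that description. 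Your identification of $\upsilon$ with ``the number of stratum layers between $\bm{L}^\star$ and the ambient space'' is therefore wrong in substance, and without an actual derivation (or citation) of the rank-indicator exponent your argument carries no content for the crucial constant.

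Second, your combination step is not justified as stated. You propose to apply the sum rule of \cite[Theorem 3.3]{li2018calculus} directly to the original objective \eqref{eq:RPCA} under a constraint qualification, taking the exponent of the sum to be the largest component exponent. That rule is for \emph{block-separable} sums $\sum_i f_i(\bm{X}_i)$, and \eqref{eq:RPCA} is not block separable: the quadratic fidelity term couples $\bm{L}$ and $\bm{S}$, so ``max of exponents'' does not follow for the coupled sum, and the transversality condition you sketch is not the hypothesis of any calculus rule you have identified. The paper circumvents exactly this obstacle by first introducing an auxiliary variable $\bm{Z}$ in place of $\bm{L}$ in the fidelity term together with a penalty $\frac{\alpha}{2}\|\bm{L}-\bm{Z}\|_F^2$, obtaining the reformulation \eqref{eq:RPCA-AppendixH-2} to which the block-separable calculus (combined with the exponent $\tfrac{1}{2}$ for the $\ell_0$ part and $1-\frac{1}{4.9^{\upsilon}}$ for the rank part) can be applied, yielding $\max\{\tfrac12, 1-\frac{1}{4.9^{\upsilon}}\}$. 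To repair your route you would either need to adopt a similar decoupling, or invoke and verify a genuinely different exponent-calculus result that handles the coupled quadratic term.
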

\begin{proof}
    See Appendix \ref{Appendix-L}.
\end{proof}

\section{Numerical Experiments \label{sec:simulations}}

In this section, we demonstrate numerical results on two popular applications, i.e., sparse signal recovery and robust pca, to show the fast convergence and better optimality achieved by the proposed algorithm. We compare our PDOM algorithm with some widely recognized first-order methods, namely, PG \cite{parikh2014proximal} and mAPG \cite{li2015accelerated}, as well as second-order methods, namely, PANOC \cite{stella2017simple} and PANOCplus \cite{de2022proximal} (an adaptive step size scheme for PANOC). To ensure the proposed algorithm works properly, we choose the hyper-parameters as follows. Specifically, we set 
$\tau = 1/L_{q}$, $\gamma = 0.98$, and $\epsilon^{\mathrm{abs}} = \epsilon^{\mathrm{rel}} = 10^{-12}$. To guarantee a fair comparison, each benchmark algorithm is carefully tuned. Moreover, we set the maximum iteration number to $2000$.

\subsection{Nonconvex Sparse Recovery}

The sparse recovery problem has been widely studied in signal processing and machine learning area \cite{wen2016robust,chen2014convergence,yang2020learning}. Recovering sparse solution from its noisy observation is done by solving the following optimization problem
\begin{equation}
\label{eq:LASSO-1}
\min_{\bm{x}\in \mathbb{R}^n} \frac{1}{2}\|\bm{A} \bm{x}-\bm{y}\|_2^2+\lambda\|\bm{x}\|_0,
\end{equation}
where $\bm{A}\in \mathbb{R}^{m \times n}$ and $\lambda >0$. The exact Hessian $\bm{H}$ of \eqref{eq:LASSO-1} is $\bm{A}^{T}\bm{A}$. Due to $\bm{A}$ is a fat matrix, i.e., $m < n$, $\bm{H}$ is rank deficient. To make $\bm{H}$ invertible in numerical, we add a small square norm term of $\bm{x}$ to \eqref{eq:LASSO-1}, thus we have
\begin{equation}
\label{eq:LASSO-2}
\min_{\bm{x}\in \mathbb{R}^n} \frac{1}{2}\|\bm{A} \bm{x}-\bm{y}\|_2^2+\lambda\|\bm{x}\|_0 + \frac{\mu}{2} \|\bm{x}\|^{2}_{2},
\end{equation}
where $\mu > 0$. The solution space of modified formulation \eqref{eq:LASSO-2} is nearly equivalent to that of \eqref{eq:LASSO-1} when $\mu$ is small (but the problem becomes ill-conditioned).

The experimental settings are summarized as follows. The matrix $\bm{A}$ is a standard random Gaussian matrix, with its entries following a $\mathcal{N}(0, 1)$ distribution, and has $m = n/2$ rows. The observation $\bm{y}$ is generated as $\bm{y} = \bm{A}\bm{x}^{\star} + \epsilon$ where the ground truth vector $\bm{x}^{\star}$ is sparse. Entries of $\bm{x}^{\star}$ are random positive or negative numbers. The regularization parameter $\lambda < 0.1\left|\bm{A}^{T}\bm{y}\right|_{\infty}$, following the strategy outlined in \cite{van2009probing}. The value of $\mu$ is small and makes the condition number of $\eqref{eq:LASSO-2}$ larger than $10^{5}$. All algorithms share the same randomly generated initial point $\bm{x}^{0}$ for each test. In the $k$-th iteration, we calculate the subdifferential and the normalized recovery error (NRE) for each algorithm as
\begin{equation}
    \text{NRE}(k) = \frac{\|\bm{x}^{k}-\bm{x}^{\star}\|}{\|\bm{x}^{\star}\|}.
\end{equation}

Table \ref{tab:1} provides an overview of the average performance of the proposed algorithm and benchmark methods across various problem sizes and regularization parameters (noiseless). The results indicate that PDOM has a faster convergence and the ability to find a better local optimum than the other algorithms. This is evident in its reduced number of iterations to approach the critical point and ultimately achieve a smaller NER. Note that in two instances without bolded NRE, PDOM achieve larger errors compared to the other instances due to the choice of $\lambda$, resulting in the local optimal point being distant from the ground truth. Despite this, the PDOM still outperforms the benchmark methods.

\begin{table}[htbp]
\centering
\begin{tabular}{|c|c|c|c|}
\hline \diagbox{$\lambda$}{$m$} & 100  & 500 & 1000 \\
\hline 
0.01 & 209.1& 486.3& 216.2\\
\hline 
0.05 & 368.2& 339.9&343.6\\
\hline 
0.10 & 351.4&468.7 &443.5\\
\hline
\end{tabular}
    \caption{Average number of proximal operators PDOM needed to reach $\|\partial f\|<10^{-5}$ across 9 instances in Table \ref{tab:1}. The scale of $\lambda$ is $\times \left|\bm{A}^{T}\bm{y}\right|_{\infty}$}
    \label{tab:3}
\end{table}

The average number of operator calculations required by PDOM is summarized in Table \ref{tab:3}. It's worth noting that PG, PANOC, and PANOCplus each operate one proximal mapping, while mAPG requires two in each iteration. The backtracking scheme in PDOM incurs a higher cost, resulting in more proximal operator calculations compared to other methods, as a trade-off for achieving fewer iterations.

\begin{figure}[htbp]
\begin{minipage}[b]{.48\linewidth}
  \centering
\centerline{\includegraphics[width=4.5cm]{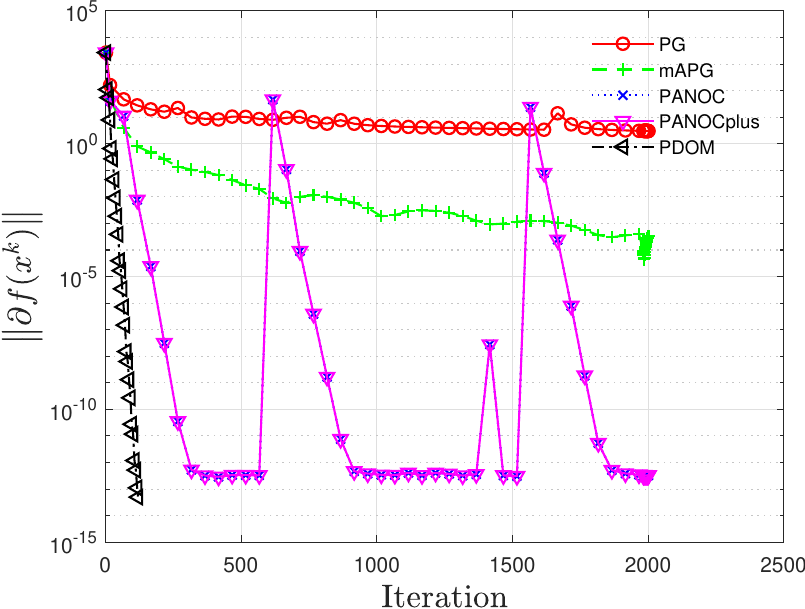}}
\end{minipage}
\hfill
\begin{minipage}[b]{0.48\linewidth}
  \centering
\centerline{\includegraphics[width=4.5cm]{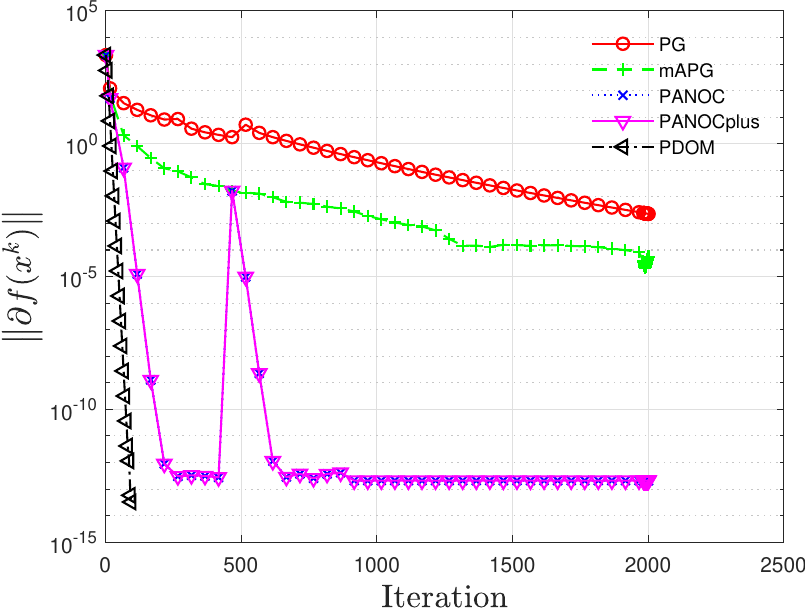}}
\end{minipage}
\begin{minipage}[b]{.48\linewidth}
  \centering
\centerline{\includegraphics[width=4.5cm]{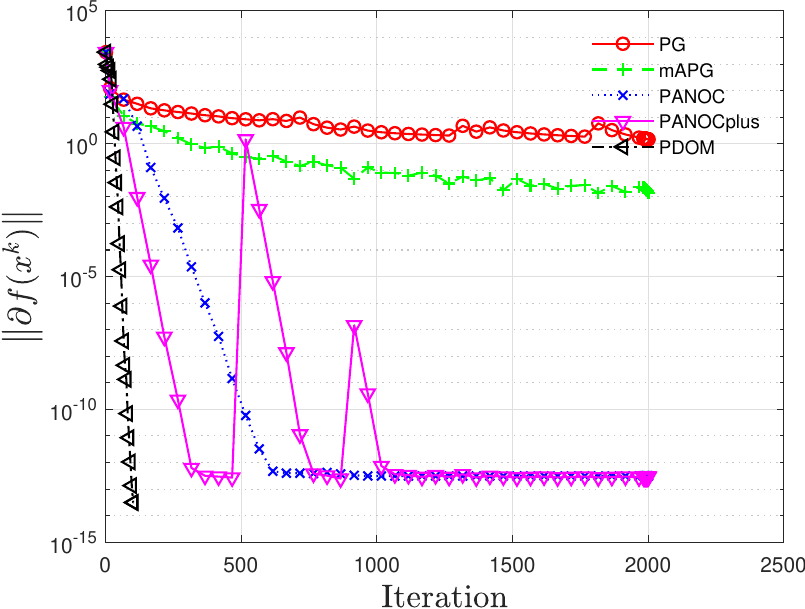}}
\end{minipage}
\hfill
\begin{minipage}[b]{0.48\linewidth}
  \centering
\centerline{\includegraphics[width=4.5cm]{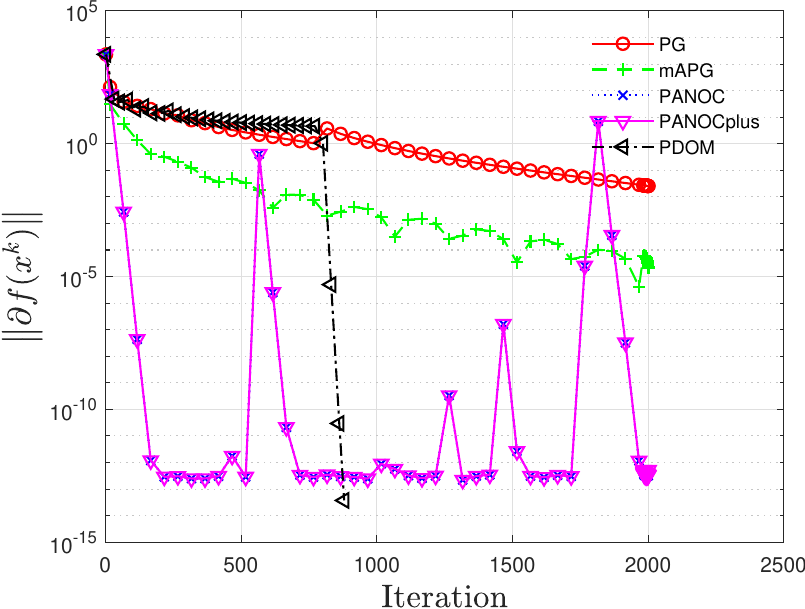}}
\end{minipage}
\caption{Convergence behavior of subdifferential for the first four instances (one realization) from Table \ref{tab:1}.}
\label{fig:1}
\end{figure}

Figure \ref{fig:1} depicts the specific convergence behavior of the compared algorithms on four instances of results in Table \ref{tab:1}. It can be seen that PDOM usually outperforms other baseline algorithms on convergence rate. The last subfigure seemingly shows PDOM a slower convergence rate, but PANOC(plus) experiences multiple sharp ascents after first converge, referring to meet long flat regions or saddle points. PDOM undergoes no oscillation. Combining the results in Table \ref{tab:1}, PDOM has the advance in finding a better local region.

In Figure \ref{fig:l0_pt}, we plot the phase transition curves for the tested algorithms. The outcomes demonstrate a significantly higher success recovery rate for the PDOM algorithm compared to the benchmark algorithms in both noiseless and noisy cases.

\begin{figure}
    \centering
    \begin{subfigure}{0.327\textwidth}
        \includegraphics[width=\linewidth]{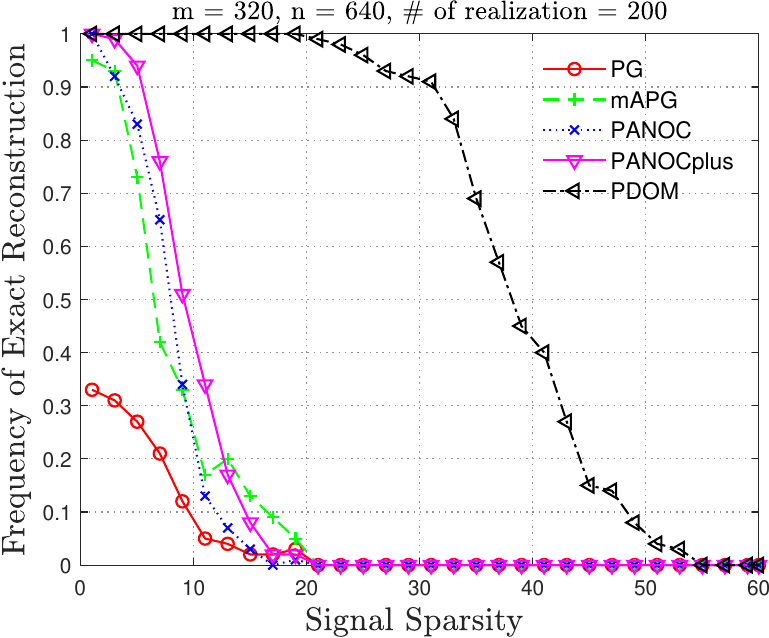}
        \caption{Noiseless case.}
    \end{subfigure}
    \begin{subfigure}{0.327\textwidth}
        \includegraphics[width=\linewidth]{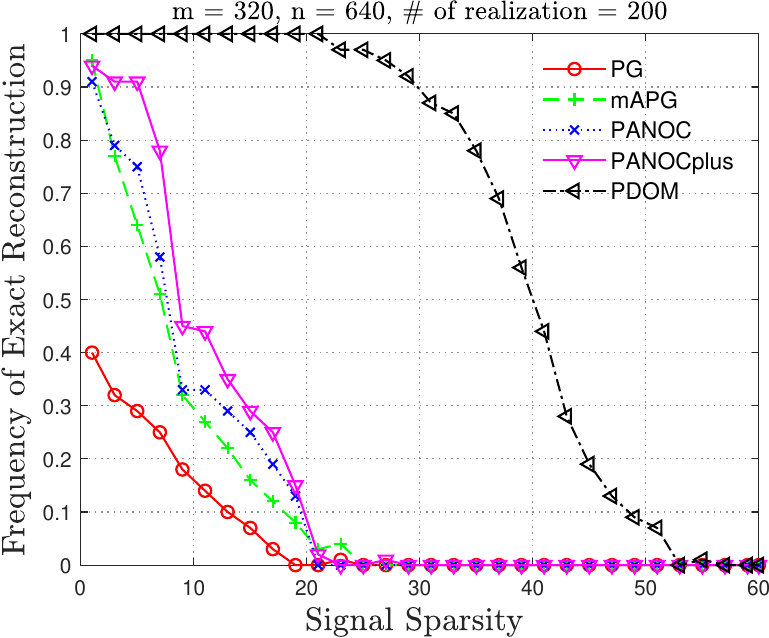}
        \caption{Noisy case: $\epsilon \sim \mathcal{N}(0, 0.1)$.}
    \end{subfigure}
    \caption{Phase transition curve of $\ell_{0}$ sparse recovery at varying sparsities. Realizations with random initialization
are considered successful if NRE $<10^{-4}$ for noiseless case or $< 10^{-2}$ for noisy case.}
    \label{fig:l0_pt}
\end{figure}

\subsection{Robust Principal Component Analysis (RPCA)}

We consider the problem of matrix decomposition, aiming to decompose a given matrix $\bm{M} \in \mathbb{R}^{m\times n}$ into a low-rank matrix $\bm{L}$ and a sparse matrix $\bm{S}$. This problem finds applications in detecting anomalies in traffic volume \cite{mardani2012dynamic} and identifying moving objects \cite{cao2015total}. The optimization problem is formulated as follows
\begin{equation}
\underset{\bm{L}, \bm{S} \in \mathbb{R}^{m \times n}}{\min} \frac{1}{2}\left\|\bm{M}-\bm{L}-\bm{S}\right\|_F^2+\delta_{\text {rank } \leq r}\left(\bm{L}\right)+\lambda\left\|\bm{S}\right\|_0,
\label{eq:RPCA}
\end{equation}
where $\delta_{\text {rank} \leq r}\left(\bm{L}\right)$ denotes the
indicator function which returns zero when $\text {rank}(\bm{L})\leq r$, and $+\infty$ otherwise. The nonconvexity of \eqref{eq:RPCA} arises from the indicator function associated with the rank constraint and the $\ell_{0}$ pseudo-norm. The proximal mapping of $\delta_{\text {rank } \leq r}\left(\bm{L}\right)$ is the projection onto the at most rank-$r$ space, i.e., $\Pi_{\text {rank } \leq r}(\bm{L})=\bm{U}_r \operatorname{diag}\left(\sigma_1, \ldots, \sigma_r\right) \bm{V}_r^T$, where $\sigma_1 \ldots \sigma_r$ are $r$ largest singular values of $\bm{L}$, and $\bm{U}_r, \bm{V}_r$ denote the matrices of left and right singular vectors, respectively. 

The experimental settings follow the one in \cite{candes2011robust}. In particular, one has $m = n$, $r = 0.05m$, and $\lambda=1 / \sqrt{n}$. We generate a rank-$r$ matrix $\bm{L}^{\star}$ with entries following a $\mathcal{N}(0, 1)$ distribution. The sparse matrix $\bm{S}^{\star}$ contains $0.1m^{2}$ independent Bernoulli $\pm 1$ entries, whose locations are randomly selected. The initial points, $\bm{L}^{0}$ and $\bm{S}^{0}$, are randomly generated. 

\begin{table}
    \centering
    \begin{tabular}{|c|c|c|c|}
\hline
$m$ & 100 & 500 & 1000\\
\hline
\begin{tabular}{c} 
PG \\
NER/\#Iter
\end{tabular} & 0.048/$>$2000 & 0.212/$>$2000 & 0.142/$>$2000\\
\hline
\begin{tabular}{c} 
mAPG \\
NER/\#Iter
\end{tabular} & 0.108/427 & 0.2980/483 & 0.191/511\\
\hline
\begin{tabular}{c} 
PANOC \\
NER/\#Iter
\end{tabular} & 0.081/73 & 0.263/79 & 0.556/77\\
\hline
\begin{tabular}{c} 
PANOCplus \\
NER/\#Iter
\end{tabular} & 0.116/114 & 0.253/128 & 0.188/120\\
\hline
\begin{tabular}{c} 
PDOM \\
NER/\#Iter
\end{tabular} & \textbf{5.308e-14/43} & \textbf{0.0027/54} &  \textbf{0.0099/42} \\
\hline
\end{tabular}
    \caption{Average recovery error of the low-rank matrix and number of iterations to reach $\|\partial f\|<10^{-5}$ for 10 independent trials with different $m$.}
    \label{tab:2}
\end{table}

The average performances of the PDOM and benchmark algorithms for solving RPCA with different sizes are presented in Table \ref{tab:2}. The results indicate that PDOM has a faster convergence rate and finds better local minimizers. 

Figure \ref{fig:2} demonstrates the convergence behavior of one instance. It can be seen that PDOM converges more rapidly (but still at a linear rate) to a critical point compared to the benchmarks and it does so without experiencing further oscillations. Meanwhile, PDOM attains a much smaller recovery error of the low-rank matrix.

\begin{figure}
\begin{minipage}[b]{.48\linewidth}
  \centering
\centerline{\includegraphics[width=4.5cm]{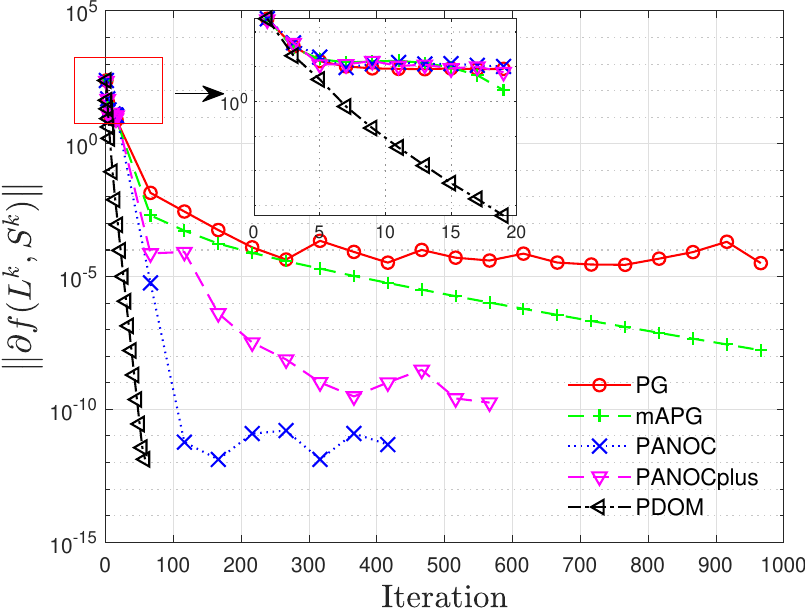}}
\end{minipage}
\hfill
\begin{minipage}[b]{0.48\linewidth}
  \centering
\centerline{\includegraphics[width=4.5cm]{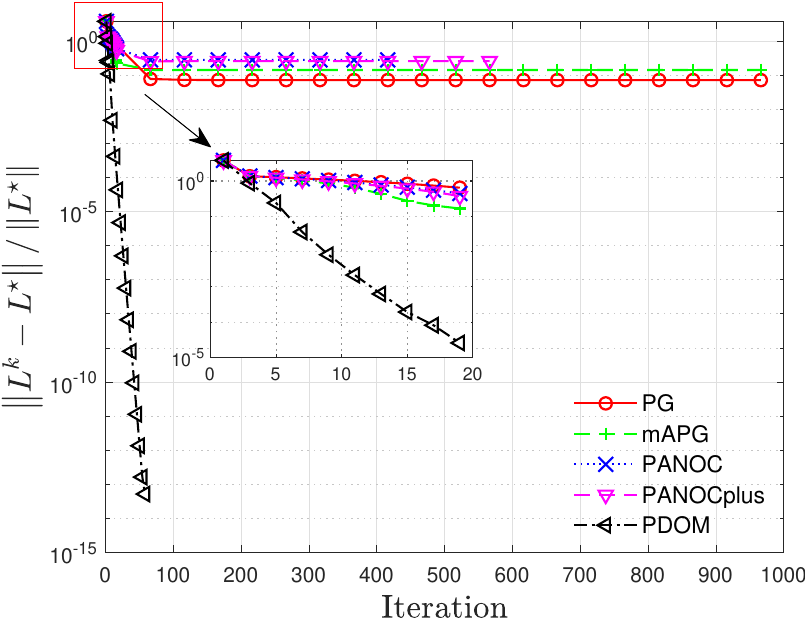}}
\end{minipage}
\caption{Convergence behavior of the RPCA problem with $m = 100$. \textit{Left}: Performance comparisons of subdifferential. \textit{Right}: Performance comparisons of normalized recovery error of the low-rank matrix}
\label{fig:2}
\end{figure}

We further plots the fraction of low-error recoveries for varying ranks in Figure \ref{fig:lr_pt}. Note that the PDOM algorithm successfully recovers $\bm{L}$ over a much wider rank range with a higher possibility. 

\begin{figure}[htbp]
    \centering
    \begin{subfigure}{0.33\textwidth}
        \includegraphics[width=\linewidth]{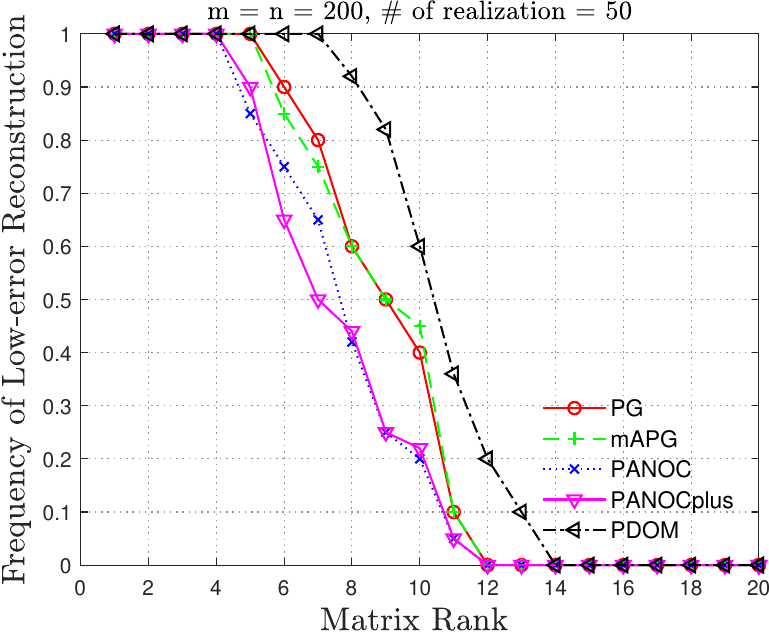}
        \caption{Sparsity level of $\bm{S}^{\star}$ = $0.05m^{2}$ }
    \end{subfigure}
    \begin{subfigure}{0.33\textwidth}
        \includegraphics[width=\linewidth]{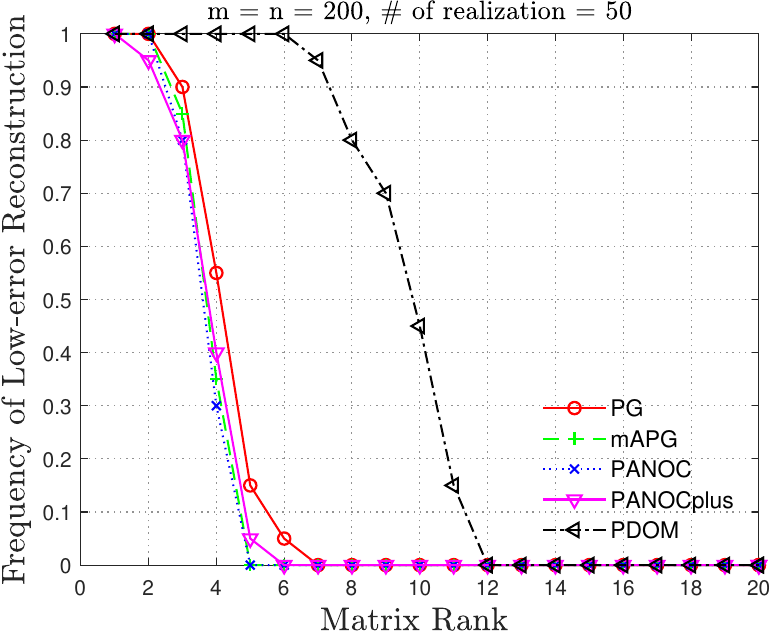}
        \caption{Sparsity level of $\bm{S}^{\star}$ = $0.2m^{2}$.}
    \end{subfigure}
    \caption{Phase transition curve of RPCA at varying ranks. Realizations with random initialization
are considered successful if $\left\|\hat{\bm{L}}-\bm{L}^\star\right\|_F /\left\|\bm{L}^\star\right\|_F<10^{-3}$.}
    \label{fig:lr_pt}
\end{figure}

\section{Conclusion}

Our paper introduces the PDOM algorithm for efficient handling of nonconvex and nonsmooth problems with a quadratic term. In each iteration, the algorithm builds and minimizes a majorization along a hybrid direction. Theoretical analysis establishes the global convergence of the PDOM to a critical point, and local convergence rates are explored based on the KL property. Numerical experiments verify faster convergence and the ability to reach superior local optimum for nonconvex problems.
\appendices
\section{Preliminaries of Proofs}
 Given a positive definite matrix $\bm{Q} \succ 0$ with bounded eigenvalues, it holds that
\begin{align}
    & \lambda_{\min} \| \bm{g} \|^2 
    \le \| \bm{g} \|_{\bm{Q}}^2 = \bm{g}^{T} \bm{Q} \bm{g} 
    \le \lambda_{\max} \| \bm{g} \|^2,\label{eq:lambda--inequality}
\end{align}
where $\lambda_{\max}$ and $\lambda_{\min}$ denote the largest and the smallest eigenvalue of $\bm{Q}$, respectively. To facilitate subsequent analyses, a coordinate shift is applied, making $\left(\bm{x}^k, q\left(\bm{x}^k\right)\right)$ the new origin. The smooth part and surrogate become
\begin{align}
q(\bm{x}):=\left\langle\bm{g}, \bm{x}\right\rangle+\frac{1}{2 }\|\bm{x}\|^2_{\bm{Q}}, \quad
m_{\alpha}(\bm{x}):=\left\langle\bm{g}_\alpha, \bm{x}\right\rangle+\frac{1}{2 \tau_\alpha}\|\bm{x}\|^2 .\nonumber
\end{align}

\section{Proof of Lemma \ref{lem:zero-inner-product}}
\label{Appendix-B}
We begin with the trivial case where $\alpha \in(0,1]$,
\begin{align}
    & \quad \langle \bm{p}(\alpha),\nabla q(\bm{p}(\alpha)) \rangle \nonumber\\
    & \quad =-\tau\bm{g}\left(-\tau\bm{Q}\bm{g}+\bm{g}\right) = \tau^{2}\left(\| \bm{g} \|_{\bm{Q}}^2 - \frac{1}{ \tau}\| \bm{g} \|^2\right)\nonumber\\
    & \quad \le \tau^{2}\left(\lambda_{\max}\| \bm{g} \|^2 - \frac{1}{ \tau}\| \bm{g} \|^2\right) \le 0.\nonumber
\end{align}
Then for the remaining range of $\alpha \in \left( 1,2 \right]$, we have
\begin{align}
    &\langle \bm{p}(\alpha),\nabla q(\bm{p}(\alpha)) \rangle \nonumber\\
    &= -\tau\|\bm{g}\|^2+(\alpha-1)\left(\tau\|\bm{g}\|^2-\|\bm{g}\|_{\bm{Q}^{-1}}^2\right)\nonumber\\
    & < (\alpha-1)\left(\tau\|\bm{g}\|^2-\|\bm{g}\|_{\bm{Q}^{-1}}^2\right) \le 0.
    \label{eq:inner-product-path-qd-12}
\end{align}
The equality of \eqref{eq:inner-product-path-qd-12} only holds when $\tau$ is exactly $1/\lambda_{\max}$.
\section{Proof of Theorem \ref{theorem:m-alpha-upper-bound}}
\label{Appendix-D}
We start the proof by showing the following lemmas.
\begin{lemma}
\label{lem:1}
    Given $\bar{q}$ and $\bar{m}_{\alpha}$ defined in Theorem \ref{theorem:m-alpha-upper-bound}, it holds that $\bar{q}(0) = \bar{m}_{\alpha}(0)$ and $\bar{q}^{\prime}(0) = \bar{m}^{\prime}_{\alpha}(0) < 0$.
\end{lemma}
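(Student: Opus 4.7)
The plan is to verify the lemma by direct computation, exploiting the shifted coordinate system in which $\bm{x}^k$ has been translated to the origin. At $\beta = 0$, one has $\bm{x}(0) = \bm{0}$, so the equality of values reduces to $q(\bm{0}) = 0 = m_\alpha(\bm{0})$, which is immediate from the shifted forms
\[
q(\bm{x}) = \langle \bm{g}, \bm{x}\rangle + \tfrac{1}{2}\|\bm{x}\|_{\bm{Q}}^2,\qquad
m_\alpha(\bm{x}) = \langle \bm{g}_\alpha, \bm{x}\rangle + \tfrac{1}{2\tau_\alpha}\|\bm{x}\|^2.
\]

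Next, I would apply the chain rule to obtain $\bar q'(\beta) = \langle \nabla q(\bm{x}(\beta)), \bm{p}(\alpha)\rangle$ and $\bar m_\alpha'(\beta) = \langle \nabla m_\alpha(\bm{x}(\beta)), \bm{p}(\alpha)\rangle$. Evaluating at $\beta = 0$ gives $\bar q'(0) = \langle \bm{g}, \bm{p}(\alpha)\rangle$ and $\bar m_\alpha'(0) = \langle \bm{g}_\alpha, \bm{p}(\alpha)\rangle$. The equality $\bar q'(0) = \bar m_\alpha'(0)$ then follows by plugging in the definition of $\bm{g}_\alpha$ in \eqref{eq:surrogate-1}, since
\[
\langle \bm{g}_\alpha, \bm{p}(\alpha)\rangle = \left\langle \frac{\langle \bm{g}, \bm{p}(\alpha)\rangle}{\|\bm{p}(\alpha)\|^2}\,\bm{p}(\alpha),\, \bm{p}(\alpha)\right\rangle = \langle \bm{g}, \bm{p}(\alpha)\rangle.
\]

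For the strict negativity, the plan is to split on the two branches of \eqref{eq:dogleg-standard}. For $\alpha \in (0,1]$, $\bm{p}(\alpha) = -\tau\bm{g}$ gives $\langle \bm{g}, \bm{p}(\alpha)\rangle = -\tau\|\bm{g}\|^2 < 0$ whenever $\bm{g} \neq \bm{0}$. For $\alpha \in (1,2]$, expand $\bm{p}(\alpha) = -\tau\bm{g} + (\alpha-1)(\tau\bm{g} - \bm{Q}^{-1}\bm{g})$, so that
\[
\langle \bm{g}, \bm{p}(\alpha)\rangle = -\tau\|\bm{g}\|^2 + (\alpha-1)\bigl(\tau\|\bm{g}\|^2 - \|\bm{g}\|_{\bm{Q}^{-1}}^2\bigr).
\]
Using $\tau \le 1/\lambda_{\max}$ together with $\|\bm{g}\|_{\bm{Q}^{-1}}^2 \ge \|\bm{g}\|^2/\lambda_{\max}$ (the companion of \eqref{eq:lambda--inequality} applied to $\bm{Q}^{-1}$) shows that the second term is non-positive, while the first is strictly negative for $\bm{g} \neq \bm{0}$; hence the sum is strictly negative. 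The degenerate case $\bm{g} = \bm{0}$ can be excluded since then $\bm{x}^k$ is already a critical point and the algorithm terminates. I do not anticipate a real obstacle here; the only subtlety is ensuring the sign argument in the second branch, for which citing Lemma \ref{lem:zero-inner-product}'s argument structure (or reusing its inequality on the term $\tau\|\bm{g}\|^2 - \|\bm{g}\|_{\bm{Q}^{-1}}^2$) keeps the presentation compact.
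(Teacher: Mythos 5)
Your proof is correct and follows essentially the same route as the paper: the same coordinate-shifted check that both functions vanish at $\beta=0$, the same chain-rule computation showing $\bar{q}^{\prime}(0)=\bar{m}_{\alpha}^{\prime}(0)=\langle\bm{g},\bm{p}(\alpha)\rangle$, and negativity obtained from the same inequality underlying Lemma \ref{lem:zero-inner-product}, which you simply inline branch by branch rather than cite. The only quibble is your aside that $\bm{g}=\bm{0}$ means the algorithm terminates --- that would make $\bm{x}^k$ a critical point of $q$, not necessarily of $f$; the case is excluded rather because $\bm{p}(\alpha)=\bm{0}$ makes $\bm{g}_{\alpha}$ and $\tau_{\alpha}$ undefined (an assumption the paper also leaves implicit) --- and this does not affect correctness.
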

\begin{proof}
    It is easy to show that $\bar{q}(0)= q(\bm{x}(0)) = q(\bm{0}) = 0$, and $\bar{m}_{\alpha}(0) = m_{\alpha}(\bm{x}(0)) = m_{\alpha}(\bm{0}) = 0$. Then we prove the negative gradient.
    It holds that 
\begin{align*}
    &\bar{q}^{\prime}(0) = \left. \beta \bm{p}(\alpha)^{\mathsf{T}} \bm{Q} \bm{p}(\alpha) + \bm{g}^{\mathsf{T}} \bm{p}(\alpha) \right|_{\beta=0} =\langle \bm{g},\bm{p}(\alpha) \rangle,\\
    &\bar{m}_{\alpha}^{\prime}(0)= \left. \bm{g}_{\alpha}^{\mathsf{T}} \bm{p}(\alpha) + \frac{1}{\tau_{\alpha}} \beta \| \bm{p}(\alpha) \|^2  \right|_{\beta=0} = \langle \bm{g},\bm{p}(\alpha) \rangle.
\end{align*}
From Lemma \ref{lem:zero-inner-product}, we prove that  $\bar{q}^{\prime}(0) = \bar{m}_{\alpha}^{\prime}(0) < 0$.
\end{proof}
\begin{lemma}
\label{lem:uni-variate-quadratic-bound}
Let $m(x)$ and $q(x)$ be univariate strictly convex quadratic functions. Suppose that 
\begin{enumerate}[label=(\roman*)]
    \item $m(0) = q(0)$ and $m^{\prime}(0)  = q^{\prime}(0) \ne 0$,
    \item $x_m^{\#} = \tau x_q^{\#}$ for some $\tau \in (0,1)$, where $x_m^{\#}  := \arg \min_x~ m(x)$ and $
        x_q^{\#} 
            := \arg \min_{x}~ q(x)$,
\end{enumerate}
then, it holds that
\begin{enumerate}[label=(\roman*)]
    \item $\left| \frac{ x_m^{\#} }{ m^{\prime}(0) } \right| < \left| \frac{ x_q^{\#} }{ q^{\prime}(0) } \right|$,
    \item $m(x) \ge q(x)$ for all $x$, and the equality holds if and only if $x=0$.
\end{enumerate}
\end{lemma}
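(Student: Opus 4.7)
The plan is to parametrize the two quadratics canonically and read off all claims from a short algebraic comparison. Since $m$ and $q$ are univariate strictly convex quadratics and they share both value and derivative at the origin, I would write
\begin{equation*}
m(x) = c + gx + \tfrac{a_m}{2}x^2, \qquad q(x) = c + gx + \tfrac{a_q}{2}x^2,
\end{equation*}
where $c := m(0) = q(0)$, $g := m'(0) = q'(0) \neq 0$, and $a_m, a_q > 0$ by strict convexity.

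Next I would compute the minimizers: $x_m^{\#} = -g/a_m$ and $x_q^{\#} = -g/a_q$, both nonzero since $g \neq 0$. The hypothesis $x_m^{\#} = \tau x_q^{\#}$ with $\tau \in (0,1)$ then translates (after canceling the common factor $-g$) into the clean quantitative relation $a_q = \tau a_m$, which in particular yields $0 < a_q < a_m$. This is the only place the assumption $\tau \in (0,1)$ is used, and it is precisely the ordering of the two curvature coefficients that drives both conclusions.

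For claim (i), I would substitute directly: $\left| x_m^{\#}/m'(0) \right| = |{-g/a_m}|/|g| = 1/a_m$, and similarly $\left| x_q^{\#}/q'(0) \right| = 1/a_q$, so $a_m > a_q > 0$ gives the strict inequality. For claim (ii), the difference simplifies because the linear and constant parts cancel:
\begin{equation*}
m(x) - q(x) = \tfrac{a_m - a_q}{2}\, x^2,
\end{equation*}
and $a_m - a_q > 0$ makes this nonnegative, vanishing if and only if $x = 0$.

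There is no real obstacle here; the only care needed is to verify that the normalization $x_m^{\#} = \tau x_q^{\#}$ with $\tau \in (0,1)$ is compatible with $g \neq 0$ (so that dividing by $g$ is legitimate) and with strict convexity of both functions (so $a_m, a_q > 0$ before deducing $a_q = \tau a_m$). Both are immediate from the hypotheses, so the lemma reduces to the two one-line computations above.
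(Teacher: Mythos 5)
Your proof is correct and follows essentially the same route as the paper: parametrize both quadratics with the common value and slope at $0$, read off the minimizers, translate $x_m^{\#} = \tau x_q^{\#}$ into an ordering of the curvature coefficients, and conclude both claims from that ordering (the paper writes the curvatures as $1/\tau_m$ and $1/\tau_q$ rather than $a_m$ and $a_q$, which is purely notational). Your explicit computation of $m(x)-q(x)=\tfrac{a_m-a_q}{2}x^2$ is in fact slightly cleaner than the paper's final step, which asserts the same conclusion without displaying the difference.
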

\begin{proof}
    As both $m(x)$ and $q(x)$ are quadratic, one can write them as $q(x) = q(0) + q^{\prime}(0) x + \frac{1}{2 \tau_q} x^2$ and $m(x) = q(0) + q^{\prime}(0) x + \frac{1}{2 \tau_m} x^2$. It is clear that $x_q^{\#} 
        = - \tau_q q^{\prime}(0),
        x_m^{\#} 
        = - \tau_m q^{\prime}(0).$
    From the assumption that 
    \begin{align*}
        & x_m^{\#}
        = \arg \min_x~ m(x) 
        = \tau \arg \min_x~ q(x)
        = \tau x_q^{\#},
    \end{align*}
    it holds that 
    \begin{align*}
        & 
        \left| \frac{ x_m^{\#} }{ m^{\prime}(0) } \right| 
        = \tau_m = \tau \tau_f 
        < \tau_f 
        = \left| \frac{ x_q^{\#} }{ q^{\prime}(0) } \right|,
    \end{align*}
    or equivalently, $\frac{1}{\tau_m} > \frac{1}{\tau_f}$. Therefore, $m(x) \ge q(x)$ where the equality holds if and only if $x=0$. Both claims in the lemma are therefore proved. 
\end{proof}
Based on Lemma \ref{lem:1} and Lemma \ref{lem:uni-variate-quadratic-bound}, Theorem \ref{theorem:m-alpha-upper-bound} can be proved by showing the lemma below.
\begin{lemma}
    \label{lem:m-bar-f-bar}
    Considering $\bar{q}(\beta)$ and $\bar{m}(\beta)$ defined in Theorem \ref{theorem:m-alpha-upper-bound}, it holds that $1 = \arg \min_{\beta} \bar{m}_{\alpha}(\beta)
        \le \arg \min_{\beta} \bar{q}(\beta)$.
    % \begin{align*}
    %     & 1 = \arg \min_{\beta} \bar{m}_{\alpha}(\beta)
    %     \le \arg \min_{\beta} \bar{q}(\beta).
    % \end{align*}
\end{lemma}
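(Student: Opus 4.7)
The plan is to reduce the claim to two one-variable calculations on the strictly convex univariate quadratics $\bar{m}_\alpha(\beta)$ and $\bar{q}(\beta)$, namely: (i) locate $\arg\min_\beta \bar{m}_\alpha(\beta)$ exactly by a direct first-order condition, and (ii) bound $\arg\min_\beta \bar{q}(\beta)$ from below by evaluating $\bar{q}'(1)$ and invoking Lemma \ref{lem:zero-inner-product}. Both $\bar{q}$ and $\bar{m}_\alpha$ are strictly convex in $\beta$ (because $\bm{Q}\succ 0$, $\tau_\alpha>0$, and $\bm{p}(\alpha)\ne \bm{0}$), so in each case the unique stationary point is the global minimizer and comparing $\arg\min$ values is equivalent to comparing signs of derivatives at $\beta=1$.

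For step (i), I would plug $\bm{x}(\beta)=\beta\bm{p}(\alpha)$ into the definitions, getting $\bar{m}_\alpha(\beta)=\beta\,\langle\bm{g}_\alpha,\bm{p}(\alpha)\rangle+\tfrac{\beta^2}{2\tau_\alpha}\|\bm{p}(\alpha)\|^2$. Using the formulas \eqref{eq:surrogate-1} for $\bm{g}_\alpha$ and $\tau_\alpha$, the key identity $\langle\bm{g}_\alpha,\bm{p}(\alpha)\rangle=\langle\bm{g},\bm{p}(\alpha)\rangle=-\tfrac{1}{\tau_\alpha}\|\bm{p}(\alpha)\|^2$ makes the first-order condition $\bar{m}_\alpha'(\beta)=0$ collapse to $\beta=1$. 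This is the routine part of the argument.

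For step (ii), I would exploit the fact that $\bar{q}'(\beta)=\langle\nabla q(\beta\bm{p}(\alpha)),\bm{p}(\alpha)\rangle$, so in particular $\bar{q}'(1)=\langle\nabla q(\bm{p}(\alpha)),\bm{p}(\alpha)\rangle$. Lemma \ref{lem:zero-inner-product} delivers $\bar{q}'(1)\le 0$ immediately (this is precisely why Lemma \ref{lem:zero-inner-product} was proved just beforehand). Combined with strict convexity of $\bar{q}$ in $\beta$, this forces the unique minimizer $\beta_q^{\#}$ to satisfy $\beta_q^{\#}\ge 1$, giving $1=\arg\min_\beta\bar{m}_\alpha(\beta)\le\arg\min_\beta\bar{q}(\beta)$.

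There is no real obstacle beyond bookkeeping: the only thing to be careful about is that $\bm{p}(\alpha)\ne\bm{0}$ (ensured when $\bm{g}\ne\bm{0}$, i.e., away from criticality, which is the only interesting case) so that $\bar{q}$ and $\bar{m}_\alpha$ are genuinely strictly convex quadratics in $\beta$ and the stationary points are unique. With Lemma \ref{lem:m-bar-f-bar} in hand, Theorem \ref{theorem:m-alpha-upper-bound} follows by feeding $(\bar{m}_\alpha,\bar{q})$ into Lemma \ref{lem:uni-variate-quadratic-bound}: the two hypotheses needed there are (a) $\bar{m}_\alpha(0)=\bar{q}(0)$ and $\bar{m}_\alpha'(0)=\bar{q}'(0)<0$ (supplied by Lemma \ref{lem:1}) and (b) the ratio relation $\arg\min\bar{m}_\alpha=\tau\cdot\arg\min\bar{q}$ for some $\tau\in(0,1)$, which by Lemma \ref{lem:m-bar-f-bar} can be taken as $\tau=1/\beta_q^{\#}\in(0,1]$ (with the boundary case $\tau=1$, i.e., $\beta_q^{\#}=1$, handled trivially since then $\bar{m}_\alpha$ and $\bar{q}$ agree at both stationary point and origin and the majorization inequality is immediate).
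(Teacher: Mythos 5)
Your proof is correct and follows essentially the same route as the paper's: you verify that $\bar{m}_{\alpha}'(\beta)$ vanishes exactly at $\beta=1$ (via the identities $\langle\bm{g}_{\alpha},\bm{p}(\alpha)\rangle=\langle\bm{g},\bm{p}(\alpha)\rangle=-\tfrac{1}{\tau_{\alpha}}\|\bm{p}(\alpha)\|^2$), then invoke Lemma \ref{lem:zero-inner-product} to get $\bar{q}'(1)\le 0$ and use strict convexity of $\bar{q}$ to conclude $\arg\min_{\beta}\bar{q}(\beta)\ge 1$, which is precisely the paper's argument. Your additional care about $\bm{p}(\alpha)\neq\bm{0}$ and the remark on feeding the result into Lemma \ref{lem:uni-variate-quadratic-bound} (including the boundary case) are sensible but not required for the lemma itself.
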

\begin{proof}
    It is established that
    \begin{align*}
        &\bar{m}_{\alpha}^{\prime}(1) = \left. \bm{g}_{\alpha}^{\mathsf{T}} \bm{p}(\alpha) + \frac{1}{\tau_{\alpha}} \beta \| \bm{p}(\alpha) \|^2  \right|_{\beta=1}
        = 0.
    \end{align*}
    The claim that  $1 = \arg \min_{\beta} \bar{m}_{\alpha}(\beta)$ is therefore proved. We now show that $\bar{q}^{\prime}(1) \le 0$. It is clear that 
    \begin{align*}
        &\bar{q}^{\prime}(1)
        = \left. \beta \bm{p}(\alpha)^{\mathsf{T}} \bm{Q} \bm{p}(\alpha) + \bm{g}^{\mathsf{T}} \bm{p}(\alpha) \right|_{\beta=1}\le 0,
    \end{align*}
where the last inequality comes from Lemma \ref{lem:zero-inner-product}. Combining this with Lemma \ref{lem:uni-variate-quadratic-bound} that $\bar{f}^{\prime}(0) = \bar{m}^{\prime}_{\alpha}(0) < 0$, it can be concluded that $1 = \arg \min_{\beta} \bar{m}_{\alpha}(\beta)
        \le \arg \min_{\beta} \bar{q}(\beta)$.
    % \begin{align*}
    %     & 1 = \arg \min_{\beta} \bar{m}_{\alpha}(\beta)
    %     \le \arg \min_{\beta} \bar{q}(\beta).
    % \end{align*}
This completes the proof.
\end{proof}
\section{Proof of Lemma \ref{lemma:1}}
\label{Appendix-E}
Tthe step size $\tau_{\alpha}$ is said to be an increasing function of $\alpha \in [0,2]$ if $\frac{d}{d\alpha} \tau_{\alpha} \ge 0$ hold. The positive gradient can be proved with simple algebra.
\section{Proof of Theorem \ref{theo:4}}
\label{Appendix-G}

It holds that
    \begin{align*}
    &f(\bm{x}^k) = h(\bm{x}^k) +  m_{\gamma,\alpha}(\bm{x}^k;\bm{x}^k) \\
    & \overset{(a)}{\ge} h(\bm{x}^{k+1})  +  m_{\gamma,\alpha}(\bm{x}^{k+1};\bm{x}^k)\overset{(b)}{\ge} h(\bm{x}^{k+1})  +  m_{\alpha}(\bm{x}^{k+1};\bm{x}^k)\\
    & \overset{(c)}{\ge} h(\bm{x}^{k+1}) + q(\bm{x}^{k+1}) = f(\bm{x}^{k+1}),
\end{align*}
where $(a)$ is because of the proximal operator, $(b)$ holds due to $\gamma<1$, and $(c)$ is because of the backtracking rule.

\section{Proof of Lemma \ref{lem:tau-bound}}
\label{Appendix-H}
Based on the definition of $\tau_{\alpha}$, it holds that
\begin{align}
    \tau_{\alpha^k} &=  -\frac{ \left\| \bm{p}(\alpha^{k}) \right\|^2 }{ \left\langle \nabla q(\bm{x}^{k}),\bm{p}(\alpha^{k}) \right\rangle }\nonumber\\
    & = \frac{ \left\| \bm{p}(\alpha^{k}) \right\|^2 }{(\alpha^{k}-2)\tau\left\|\nabla q(\bm{x}^{k})\right\|^{2}+(1-\alpha^{k})\left\|\nabla q(\bm{x}^{k})\right\|^{2}_{ \bm{Q}^{-1}}} \nonumber.
    %\label{eq:lemma3-1}
\end{align}
Since $\bm{Q}$ has bounded eigenvalues, it follows that both the numerator and denominator remain bounded. Consequently, we prove that the sequence $\left\{\tau_{\alpha^k}\right\}_{k \in \mathbb{N}}$ is bounded.

\section{Proof of Lemma \ref{lem:summable_x}}
\label{Appendix-Y}
 By following \eqref{eq:update-rule}, the path search procedure finds a new update $\bm{x}^{k+1}$ to make $m_{\alpha^{k}}(\bm{x}^{k+1}; \bm{x}^k)$ an upper bound of $q(\bm{x}^{k+1})$, thus we have
    \begin{align}
    &h(\bm{x}^{k}) 
    = h(\bm{x}^{k}) + \left\langle \bm{g}_{\alpha^k},\bm{x}^{k} - \bm{x}^k \right\rangle 
    + \frac{1}{2 \gamma \tau_{\alpha^k}} \left\| \bm{x}^{k} - \bm{x}^k \right\|^2 \nonumber \\
    &
    \ge h(\bm{x}^{k+1}) + \left\langle \bm{g}_{\alpha^k},\bm{x}^{k+1} - \bm{x}^k \right\rangle + \frac{1}{2 \gamma \tau_{\alpha^k}} \left\| \bm{x}^{k+1} - \bm{x}^k \right\|^2\label{eq:h-x}\\
    &q(\bm{x}^{k+1})
    \label{eq:q-x}
    \le q(\bm{x}^k)+\left\langle \bm{g}_{\alpha^k},\bm{x}^{k+1} - \bm{x}^k \right\rangle 
    + \frac{1}{2 \tau_{\alpha^k}} \left\| \bm{x}^{k+1} - \bm{x}^k \right\|^2.
\end{align}
Combining \eqref{eq:h-x} and \eqref{eq:q-x}, we derive
\begin{equation}
    \label{eq:f-x-bound}
    f(\bm{x}^{k+1}) \le f(\bm{x}^{k})-\left(\frac{1}{2 \gamma \tau_{\alpha^k}}-\frac{1}{2 \tau_{\alpha^k}}\right)\left\|\bm{x}^{k+1} - \bm{x}^k\right\|^{2}.
\end{equation}
Since we assume that $f$ is bounded from below, then the sequence $\left\{\bm{x}^k\right\}_{k \in \mathbb{N}}$ is bounded and has a limiting point.  We use $\bm{x}^{\star}$ and $f^{\star}$ to denote the limiting point of $\left\{\bm{x}^k\right\}_{k \in \mathbb{N}}$ and the objective function value on that point, respectively. By summing over $k=1,2, \cdots, \infty$, we obtain that
\begin{align*}
\sum_{k=1}^{\infty}\left(\frac{1}{2 \gamma \tau_{\alpha^k}}-\frac{1}{2 \tau_{\alpha^k}}\right)\left\|\bm{x}^{k+1}-\bm{x}^k\right\|^2 \leq f\left(\bm{x}^1\right)-f^{\star}<\infty.
\end{align*}
Given that $\left\{\tau_{\alpha^k}\right\}_{k \in \mathbb{N}}$ is bounded, we derive that
\begin{equation}
\label{eq:the5-1}
\lim _{k \rightarrow \infty}\left\|\bm{x}^{k+1}-\bm{x}^k\right\|^2 \rightarrow 0.
\end{equation}
Given that $\bm{x}^{k+1}$ becomes $\bm{v}^{k+1}$ if $\bm{v}^{k+1}$ leads to a smaller objective function, it is also necessary to satisfy
\begin{align}
\label{eq:v_x_summable}
\lim _{k \rightarrow \infty}\left\|\bm{v}^{k+1}-\bm{x}^k\right\|^2 \rightarrow 0.
\end{align}
For the proof of this statement, refer to \cite{frankel2015splitting}. This concludes the proof.

\section{Proof of Lemma \ref{lem:relative-error}}
\label{Appendix-J}
By considering the optimality condition of \eqref{eq:update-rule}, we have that
\begin{align*}
\left\|\bm{g}_{\alpha^{k}}+\frac{1}{\gamma\tau_{\alpha^{k}}}\left(\bm{x}^{k+1}-\bm{x}^{k}\right)-\nabla q(\bm{x}^{k+1})\right\| \in \left\| \partial f(\bm{x}^{k+1}) \right\|.
\end{align*}
By triangle inequality and smoothness of $\nabla q$, we have
\begin{align*}
    &\left\| \partial f(\bm{x}^{k+1}) \right\| \le \left\| \bm{g}_{\alpha^{k}}-\nabla q(\bm{x}^{k+1})\right\|+ \frac{1}{\gamma\tau_{\alpha^{k}}}\left\| \bm{x}^{k+1}-\bm{x}^{k}\right\|\\
    &\le \left\| \nabla q(\bm{x}^{k})-\nabla q(\bm{x}^{k+1})\right\|+ \frac{1}{\gamma\tau_{\alpha^{k}}}\left\| \bm{x}^{k+1}-\bm{x}^{k}\right\| + \left\|\bm{e}^{k}\right\|\\
    & \le \left(L_{q} + \frac{1}{\gamma\tau_{\alpha^{k}}}\right)\left\| \bm{x}^{k+1}-\bm{x}^{k}\right\| + \left\|\bm{e}^{k}\right\|.
\end{align*}
Based on \eqref{eq:the5-1} and Theorem \ref{the:summable_x}, we have $\lim_{k \rightarrow \infty} \left\|\bm{e}^{k}\right\| \rightarrow 0$.

\section{Proof of Theorem \ref{theo:KL-convergence-rate}}
\label{Appendix-K}
    By considering the assumption that $f$ has the KL property, for large enough $k$, based on \eqref{eq:optimality-f-v-2}, we have
    \begin{align*}
    &\psi^{\prime}\left(f\left(\bm{x}^k\right)-f\left(\bm{x}^{\star}\right)\right)\\
    &\geq \frac{1}{\left(\frac{1}{\gamma \tau_{\alpha^{k-1}}}+L_{q}\right)\left\|\left(\bm{x}^{k}-\bm{x}^{k-1}\right)\right\| + \|\bm{e}^{k-1}\|}.\nonumber
     \end{align*}
    Recall that the desingualarizing function has form of $\psi(t) = \frac{C}{1-\theta}t^{1-\theta}$ for $t \in[0, \infty)$, we have
    \begin{align}
    &\left(f\left(\bm{x}^k\right)-f\left(\bm{x}^{\star}\right)\right)^{\theta} \nonumber \\
    &\leq C \left(\left(\frac{1}{\gamma \tau_{\alpha^{k-1}}}+L_{q}\right)\left\|\left(\bm{x}^{k}-\bm{x}^{k-1}\right)\right\| + \|\bm{e}^{k-1}\|\right).
    \label{eq:KL-objective-bound-by-residual}
    \end{align}
    Due to $f\left(\bm{x}^k\right)-f\left(\bm{x}^{k+1}\right) \leq f\left(\bm{x}^k\right)-f(\bm{x}^{\star})$ for all $k$, by combining \eqref{eq:f-x-bound} and \eqref{eq:KL-objective-bound-by-residual}, we have
    \begin{equation}
    \left\|\bm{x}^{k+1}-\bm{x}^k\right\| \leq\left(\mathcal{B}\right)^{\frac{1}{2\theta}}  \left\|\bm{x}^k-\bm{x}^{k-1}\right\|^{\frac{1}{2\theta}}+ \left(\mathcal{C}\right)^{\frac{1}{2\theta}}\|\bm{e}^{k-1}\|^{\frac{1}{2\theta}}\nonumber,
    \end{equation}
    where
    \begin{align*}
        \mathcal{B} = \frac{C \left(\frac{1}{\gamma \tau_{\alpha^{k-1}}}+L_{q}\right)}{ \left(\frac{1}{2 \gamma \tau_{\alpha^{k}}}-\frac{1}{2 \tau_{\alpha^{k}}}\right)^\theta}\text{,} \quad \mathcal{C} = \frac{C}{ \left(\frac{1}{2 \gamma \tau_{\alpha^{k}}}-\frac{1}{2 \tau_{\alpha^{k}}}\right)^\theta}.
    \end{align*}
    Assume that there exists $j > k$, then by recursion law, it holds that
    \begin{align}
    &\left\|\bm{x}^{j+1}-\bm{x}^j\right\|\nonumber\leq \left(\mathcal{B}\right)^{\frac{1}{2\theta}\sum_{t=0}^{j-k}{\frac{1}{2\theta}}^t}   \left\|\bm{x}^k-\bm{x}^{k-1}\right\|^{{\frac{1}{2\theta}}^{j-k+1}}\\
    & \quad \quad \quad \quad \quad \quad+ \left(\mathcal{C}\right)^{\frac{1}{2\theta}\sum_{t=0}^{j-k}{\frac{1}{2\theta}}^t}   \left\|\bm{e}^{k-1}\right\|^{{\frac{1}{2\theta}}^{j-k+1}}\nonumber\\
    & = \left(\mathcal{B}\right)^{\frac{1}{2\theta}}  \left\|\bm{\bm{x}}^k-\bm{x}^{k-1}\right\|^{\frac{1}{2\theta}}\left(\left(\mathcal{B}\right)^{\frac{1}{1-2\theta}}\left\|\bm{x}^k-\bm{x}^{k-1}\right\|\right)^{{\frac{1}{2\theta}}^{j-k+1}-{\frac{1}{2\theta}}}\nonumber\\
    & + \left(\mathcal{C}\right)^{\frac{1}{2\theta}}  \left\|\bm{e}^{k-1}\right\|^{\frac{1}{2\theta}}\left(\left(\mathcal{C}\right)^{\frac{1}{1-2\theta}}\left\|\bm{e}^{k-1}\right\|\right)^{{\frac{1}{2\theta}}^{j-k+1}-{\frac{1}{2\theta}}}.\label{eq:KL-case1-xbound}
\end{align}
Based on Lemma \ref{lem:relative-error}, Theorem \ref{theo:KL-global-convergence} and the fact that $\frac{1}{2\theta} > 1$, there must exist a small enough $\epsilon > 0$ and a large enough $k$ that following properties hold
\begin{align}
&\left\|\bm{e}^{k-1}\right\| = 0, \quad \left(\mathcal{B}\right)^{\frac{1}{1-2\theta}}\left\|\bm{x}^k-\bm{x}^{k-1}\right\|  \leq \epsilon,\nonumber\\ 
&\sum_{j=2}^{\infty} \epsilon^{{\frac{1}{2\theta}}^j-{\frac{1}{2\theta}}}=\epsilon^{{\frac{1}{2\theta}}^2-{\frac{1}{2\theta}}} \sum_{j=2}^{\infty} \epsilon^{{\frac{1}{2\theta}}^j-{\frac{1}{2\theta}}^2} \leq \frac{\xi}{2^{\frac{1+2\theta}{2\theta}} \left(\mathcal{B}\right)^{\frac{1}{2\theta}}}.
\label{eq:KL-case1-epslion-2}
\end{align}
Thus, inequality \eqref{eq:KL-case1-xbound} becomes
\begin{align}
    &\left\|\bm{x}^{j+1}-\bm{x}^j\right\|\leq \left(\mathcal{B}\right)^{\frac{1}{2\theta}}  \left\|\bm{\bm{x}}^k-\bm{x}^{k-1}\right\|^{\frac{1}{2\theta}}  \epsilon^{{\frac{1}{2\theta}}^{j-k+1}-{\frac{1}{2\theta}}}.
\label{eq:KL-case1-final-bound}
\end{align}
Summing over $j=k+1,k+2, \cdots, \infty$, we have
\begin{align}
    &\left\|\bm{x}^{k+1}-\bm{x}^{\star}\right\|\stackrel{(a)}\leq \sum_{j=k+1}^{\infty}\left\|\bm{x}^{j+1}-\bm{x}^j\right\|\nonumber\\
    & \stackrel{(b)}\leq \left(\mathcal{B}\right)^{\frac{1}{2\theta}}\left\|\bm{x}^{k+1}-\bm{x}^k\right\|^{\frac{1}{2\theta}} \sum_{j=2}^{\infty} \epsilon^{{\frac{1}{2\theta}}^{j}-{\frac{1}{2\theta}}}\nonumber\\
    & \stackrel{(c)}\leq  2^{-{\frac{1+2\theta}{2\theta}}} \xi\left\|\bm{x}^{k+1}-\bm{x}^k\right\|^{\frac{1}{2\theta}}\nonumber\\
    & \stackrel{(d)}\leq 0.5 \xi\left(\left\|\bm{x}^{k+1}-\bm{x}^{\star}\right\|^{\frac{1}{2\theta}}+\left\|\bm{x}^{k}-\bm{x}^{\star}\right\|^{\frac{1}{2\theta}}\right)\nonumber\leq \xi \left\|\bm{x}^{k}-\bm{x}^{\star}\right\|^{\frac{1}{2\theta}},\nonumber
\end{align}
where (a) comes from triangle inequality, (b) is due to \eqref{eq:KL-case1-final-bound}, (c) is due to \eqref{eq:KL-case1-epslion-2}, and (d) is due to the range of $\theta$ and triangle inequality.

% , and (e) is based on the fact that $\left\|\bm{x}^k-\bm{x}^{\star}\right\| > \left\|\bm{x}^{k+1}-\bm{x}^{\star}\right\|$.
\section{Proof of Proposition \ref{pro:rpca-theta}}
\label{Appendix-L}
To calculate the $\theta$ value of \eqref{eq:RPCA}, we first decouple variables $\bm{L}$ and $\bm{S}$ by replacing $\bm{L}$ with $\bm{Z}$ and adding an extra regularization term to force $\bm{L} \approx \bm{Z}$. Thus, we have
\begin{align}
\underset{\bm{L}, \bm{S}, \bm{Z} \in \mathbb{R}^{m \times n}}{\min} &\frac{1}{2}\left\|\bm{M}-\bm{Z}-\bm{S}\right\|_F^2+\delta_{\text {rank } \leq r}\left(\bm{L}\right) \nonumber \\
&+\lambda\left\|\bm{S}\right\|_0+ \frac{\alpha}{2}\left\|\bm{L}-\bm{Z}\right\|^{2}_{F},
\label{eq:RPCA-AppendixH-2}
\end{align}
where $\alpha \in (0,\infty)$. Equation \eqref{eq:RPCA-AppendixH-2} can be written as block separable sums of KL functions, i.e., $\sum_{\bm{X}_{i} \in (\bm{L}, \bm{S}, \bm{Z})} f_i\left(\bm{X}_{i}\right)$. By considering the Theorem 3.3 in \cite{li2018calculus} and the facts that $f(\bm{S})$ has the $\theta$ value of $\frac{1}{2}$ and $f(\bm{L})$ has the $\theta$ value of $1-\frac{1}{4.9^{\upsilon}}$ where $\upsilon=m n+m(m-r)+n(m-r)-1$ \cite[Section 5.3]{yu2022kurdyka}, the $\theta$ value of \eqref{eq:RPCA-AppendixH-2} is $\text{max}\{\frac{1}{2},1-\frac{1}{4.9^{\upsilon}}\}$. Since $\upsilon \gg 1$ in common practice, we conclude that the $\theta$ value is $1-\frac{1}{4.9^{\upsilon}}$ for the RPCA problem.

\printbibliography
\end{document}